\documentclass[11pt]{article}

\overfullrule = 0pt
\usepackage{amsmath,amssymb,amsthm,amscd,epsfig}
\topmargin -0.5in
\textheight 9.0in

\textwidth 6.5in

\oddsidemargin 0.0in

\evensidemargin 0.0in

\newcommand{\vp}{\varepsilon}

\theoremstyle{plain}

\newtheorem{thm}{Theorem}

\newtheorem*{unthm}{Theorem}

\newtheorem{cor}{Corollary}

\newtheorem{pro}{Proposition}

\theoremstyle{definition}

\theoremstyle{remark}

\begin{document}

\title{Unstable directions and dimension for a class of skew products with overlaps}

\author{Eugen Mihailescu}

\date{}
\maketitle

\begin{abstract}

A unique and distinguishing feature of smooth hyperbolic non-invertible maps is that of having different unstable directions corresponding to different prehistories of the same point. In this
paper we construct a new class of examples of non-invertible
hyperbolic skew products with thick fibers for which we prove that
there exist uncountably many points in the locally maximal
invariant set $\Lambda$ (actually a Cantor set in each fiber),
having different unstable directions corresponding to different
prehistories; also we estimate the angle between such unstable
directions. We discuss then the Hausdorff dimension of the fibers
of $\Lambda$ for these maps by employing the thickness of Cantor sets, the inverse pressure, and also by use of continuous bounds for the preimage
counting function. We prove that in certain examples, there are uncountably
many points in $\Lambda$ with two preimages belonging to $\Lambda$, as well
as uncountably many points having only one preimage in $\Lambda$. In the end we give examples which, also from the point of view of Hausdorff dimension, are far from being homeomorphisms on $\Lambda$, as well as far from being constant-to-1 maps on $\Lambda$.

\end{abstract}

\textbf{MSC 2000:} Primary: 37D20, 28A80; Secondary: 37C45, 34D45.

\textbf{Keywords:} Hyperbolic non-invertible skew products, Cantor sets, unstable manifolds, Hausdorff dimension of fractals.

\section{Introduction}\label{sec1}

In this paper we study the chaotic dynamics of a class of non-invertible maps which are hyperbolic on their basic set of saddle type.
The dynamics of non-invertible maps (or \textit{endomorphisms}) is different than that of diffeomorphisms due to possible complicated overlappings
and to the fact that the number of preimages of a given point that remain in the respective basic set may vary.
Hyperbolicity over a compact invariant set $\Lambda$ can be defined for non-invertible smooth maps $f:M \to M$
(where $M$ is a manifold), as the continuous uniform splitting (\cite{R}) of the tangent space $T_{\hat x} M$
into a stable
subspace $E^s_x$ and an unstable subspace $E^u_{\hat x}$ for $\hat x \in \hat \Lambda$, where
$\hat x$ denotes a prehistory of $x$ (i.e a sequence of consecutive preimages of $x$ belonging to $\Lambda$)
and $\hat \Lambda$ is the space of all such sequences.
This implies that, unlike in the case of diffeomorphisms, where the local unstable manifolds form a foliation,
in the non-invertible case there may pass a priori several unstable manifolds
through a given point of $\Lambda$.
However it is a difficult problem to distinguish between unstable directions corresponding to different prehistories, and there are few actual examples of such hyperbolic endomorphisms for which several unstable manifolds pass through a given point.
One such example of an Anosov endomorphism was studied in \cite{Pr}, where also other properties of Anosov
endomorphisms were given; in fact for any arbitrary fixed point $x$ from the torus $\mathbb T^m$, it was proved that
there exists an endomorphism $f$, obtained as a perturbation of an algebraic hyperbolic endomorphism of $\mathbb T^m$,
 so that there exists infinitely many local unstable manifolds of $f$ through $x$.

In our paper we propose a way of obtaining endomorphisms with a
new type of strange behaviour, namely they are far from being
homeomorphisms (do not have an Open Set Condition-type behaviour),
and also far from being constant-to-one; this happens both from
the point of view of preimages, and from the point of view of
Hausdorff dimension. We study
 hyperbolic non-invertible skew products with basic sets whose fibers will be obtained by contractions followed by
 translations and superpositions, thus generating overlappings in fibers.
 Here by \textbf{basic set} for an endomorphism $f$ we mean an
 invariant compact set $\Lambda$ so that there exists a
 neighbourhood $U$ of $\Lambda$ with $\Lambda =
 \mathop{\cap}\limits_{n \in \mathbb Z} f^n(U)$.
We prove that, when the contraction factors are all equal to $\frac 12$ and the other parameters belong to some open set, we obtain a class of
examples in (\ref{skew-general}) which are \textbf{far} from having a homeomorphism-type
behaviour on their basic set $\Lambda$.
Our study will involve some new techniques, like the use of thickness of intersections of Cantor sets in fibers, the inverse pressure,
and approximating the number of preimages belonging to $\Lambda$ with continuous functions.

We will show that our method gives Cantor sets in fibers, which are obtained as subsets of intersections of Cantor
sets of large thickness (\cite{K}, \cite{H}, \cite{PT}). This will guarantee the existence of uncountably many points
having more than one preimage. We show that, still, there are uncountably many points with only one prehistory, giving thus an example
 where the number of prehistories is infinite for some points, and equal to 1 for others.

We proved in \cite{MU} that the
 stable dimension, i.e the Hausdorff dimension of the stable fibers (in our case), is greatly influenced by the
  preimage counting function. We will see how this relates to our skew product case. We will prove also that the
  local unstable manifolds (in fact even the unstable directions) depend on prehistories and will estimate the angle
  between them. We can give also the unstable dimension by using a Bowen type equation from \cite{M}. Our method gives a
  large class of examples of hyperbolic endomorphisms which behave \textbf{differently} than the hyperbolic diffeomorphisms, and also differently from constant-to-1 maps.

\textbf{Outline of Main Results:}

The main object of study is the skew product $f$ defined in (\ref{skew}), and also its generalization from
(\ref{skew-general}), together with their respective locally maximal invariant set $\Lambda$.

The main results of the paper are contained in Theorems
\ref{Cantor-fibers}, \ref{unstable}, \ref{general}, \ref{s-dim}
(and the remarks thereafter), and in Corollaries \ref{infinite-dir} and \ref{preim}.

First we will remind the notion of natural extension, hyperbolicity for non-invertible maps and the notions of
stable dimension and unstable dimension.
Then in Theorem \ref{Cantor-fibers} we will show, by using a result of \cite{K} (see also \cite{H}) about intersecting
two Cantor sets of large thicknesses, that there exists a Cantor
set $F_x$ of points having several preimages/prehistories, in each fiber $\Lambda_x$ of $\Lambda$; we will also estimate
 the thickness of $F_x$.
Moreover, this will give an example of a dynamical system $f$
where some points have two preimages in $\Lambda$, and other
points with only one preimage in $\Lambda$.

In Proposition \ref{hyperbolic} we will show that our skew product example is hyperbolic as an endomorphism,
on its basic set $\Lambda$.

Then in Theorem \ref{unstable} we will show that the unstable directions corresponding to different prehistories in $\Lambda$
do not coincide, and will estimate the angle between them.

In Theorem \ref{u-dim}, we give estimates of the unstable
dimension by using a Bowen type equation on the natural extension,
from \cite{M}. And in Theorem \ref{s-dim} we show that the stable
dimension at any point of $\Lambda$ is strictly smaller than 1 by
using a result about the inverse pressure from \cite{M-Cam}; and
we obtain estimates for the stable dimension by using
approximating continuous bounds for the preimage counting
function, based on results of \cite{MU}. Thus  we obtain
information on the set of points with more than one preimage,
vis-à-vis the set of points having only one preimage.

The hyperbolicity on $\Lambda$, the existence of Cantor sets in fibers of points with more than one prehistory,
and the disjointness of unstable directions coresponding to different prehistories are shown similarly, also
for the more general examples defined in (\ref{skew-general}), for a suifficiently small $\alpha$.
This property of strong non-invertibility is preserved for the examples in (\ref{skew-general}),
for all $\alpha>0$ small enough, by a
type of Newhouse phenomenon, involving intersecting Cantor sets of large thickness in fibers.

By combining Theorem \ref{s-dim} and Theorem \ref{unstable} we
prove in Corollary \ref{infinite-dir} that there exist points in
$\Lambda$ with infinitely many prehistories and infinitely many
different unstable manifolds. We deduce in Corollary \ref{preim}
that for the examples in (\ref{skew-general}) having contraction
factors equal to $\frac 12$ (and the rest of their parameters
being in an open set), there are uncountably many points with one
preimage in $\Lambda$ and uncountably many points with two
preimages in $\Lambda$. We prove then in Corollaries
\ref{infinite-dir} and \ref{preim} that, also from the point of
view of Hausdorff dimension, the behaviour of this last class of
examples (with contraction factors $\frac 12$ and the other
parameters in an open set), is far from that of a homeomorphism on
$\Lambda$, as well as far from that of a constant-to-1 map on
$\Lambda$.

Also it is important to remark that our invariant set $\Lambda(\alpha)$ is \textbf{not an attractor}, but instead
is a \textbf{saddle basic set}.

\section{Hyperbolic skew product endomorphisms. Unstable directions on the basic set.}

Hyperbolic smooth endomorphisms appear naturally in many instances
when invariant sets have self-intersections. Several aspects of
their dynamics are very different than in the case of
diffeomorphisms (for example \cite{MU}, \cite{Pr}, etc.) Consider
in the sequel a smooth (say $\mathcal{C}^2$) map $f:M \to M$ on a
smooth Riemannian manifold $M$ and let $\Lambda$ be a compact
invariant set, i.e $f(\Lambda) = \Lambda$. So each point of
$\Lambda$ has at least one $f$-preimage in $\Lambda$; however it
may have several $f$-preimages in $\Lambda$.

 We define now a \textbf{prehistory} of a
point $x \in \Lambda$, as an infinite sequence $\hat x = (x,
x_{-1}, x_{-2}, \ldots)$ of consecutive preimages, i.e $f(x_{-1})
= x, f(x_{-2}) = x_{-1}, \ldots$, with $x_{-i} \in \Lambda, i \ge
1$. We take then the space of all these prehistories $\hat
\Lambda$, and consider the shift homeomorphism $\hat f : \hat
\Lambda \to \hat \Lambda, \hat f(\hat x) = (f(x), x, x_{-1},
x_{-2}, \ldots), \hat x \in \hat \Lambda$. The compact space $\hat
\Lambda$ can be made a metric space in a natural way, and will be
called the \textbf{natural extension} of the couple $(\Lambda,
f)$. One can introduce also the tangent bundle over $\hat
\Lambda$, given by $T_{\hat x}:= \{(\hat x, v), v \in  T_{x}M\},
\hat x \in \hat \Lambda $. Now following \cite{R}, we
define the notion of hyperbolicity for the endomorphism $f$ as a
continuous splitting of the tangent bundle over $\hat \Lambda$
into stable directions and unstable directions depending on
prehistories. So we have a splitting $T_{\hat x}M = E^s_x \oplus
E^u_{\hat x}, \hat x \in \hat \Lambda$ where $Df_x(E^s_x)\subset
E^s_{f(x)}$ and $Df(E^u_{\hat x})\subset E^u_{\hat f \hat x}$ and
$Df$ contracts uniformly the vectors from $E^s_x$ and expands the
vectors from $E^u_{\hat x}$. We denote also by $Df_s(x):=
Df|_{E^s_x}$ and by $Df_u(\hat x):= Df|_{E^u_{\hat x}}, \hat x \in
\hat \Lambda$. Associated to this splitting, one constructs local
stable and local unstable manifolds of size $r>0$ for some small
$r$, denoted by $W^s_r(x)$, $W^u_r(\hat x)$, for
$\hat x \in \hat \Lambda$.

For a fixed $r>0$ small enough, we define the \textbf{stable
dimension} at $x\in \Lambda$, denoted by $\delta^s(x)$, as the
Hausdorff dimension $HD(W^s_r(x) \cap \Lambda)$; respectively the
\textbf{unstable dimension} at $\hat x \in \hat \Lambda$, denoted
by $\delta^u(\hat x)$, as $HD(W^u_r(\hat x) \cap \Lambda)$. We
proved in \cite{M} that the unstable dimension is equal to the
unique zero of the pressure function $t \to P_{\hat f|_{\hat
\Lambda}}(-t \log |Df_u(\hat x)|)$, where the pressure is
considered on the natural extension $\hat \Lambda$. However in the
case of the stable dimension there is no simple general formula,
due to the complicated foldings that may take place in $\Lambda$.

Let us also define the \textbf{preimage counting function}
$d(\cdot)$ associated to $f$ on the invariant set $\Lambda$,
namely $d(x):= \text{Card}\{f^{-1}x \cap \Lambda\}$. It can be
checked that $d(\cdot)$ is an upper semi-continuous function
(\cite{MU}); it is not necessarily constant, nor continuous. This
is bringing additional difficulties in estimating the stable
dimension in the non-invertible case, as noted in \cite{MU} or
\cite{MU2}.

For a small positive $\alpha$, let us take now the subintervals
$I_1^\alpha, I_2^\alpha \subset I:= [0, 1]$, of small positive length, with $I_1^\alpha = [b_1(\alpha),
b_2(\alpha)], I_2^\alpha = [b_3(\alpha), b_4(\alpha)]$; assume
that $b_2(\alpha) < \frac 12$, $b_2(\alpha)$ is very close to
$\frac 12$, and that $b_3(\alpha)$ is very close to $1-\alpha$ and
$b_3(\alpha) < 1-\alpha$; we assume that $|b_1(\alpha) - \frac 12|$
and $|b_3(\alpha) - (1-\alpha)|$ are both much smaller than
$\alpha$, say
\begin{equation}\label{small}
 0 < \max\{|b_1(\alpha)-\frac 12|,   |b_3(\alpha) - (1-\alpha)|\} := \epsilon(\alpha)< \alpha^2
\end{equation}

The intervals $I_1^\alpha, I_2^\alpha$ depend on $\alpha$. But may
also be denoted in the sequel simply by $I_1, I_2$ when dependence
on $\alpha$ is unambiguous.

Let us take also $g: I_1^\alpha \cup I_2^\alpha \to I$, a strictly
increasing smooth map which expands both $I_1^\alpha$ and
$I_2^\alpha$ to $I$, i.e $g(I_1^\alpha) = g(I_2^\alpha) = I$.
Assume that $g'(x) > \beta(\alpha)>>1, x \in I_1^\alpha \cup
I_2^\alpha$. From this dilation condition, we see that there exist
subintervals $I_{11}^\alpha, I_{12}^\alpha \subset I_1^\alpha$ and
$I_{21}^\alpha, I_{22}^\alpha \subset I_2^\alpha$ such that
$g(I_{11}^\alpha) = g(I_{21}^\alpha) = I_1^\alpha$ and
$g(I_{12}^\alpha) = g(I_{22}^\alpha) = I_2^\alpha$. Let us denote
by $J^\alpha : = I_{11}^\alpha \cup I_{12}^\alpha \cup
I_{21}^\alpha \cup I_{22}^\alpha$ and $$J_*^\alpha: = \{ x \in
J^\alpha, \ g^i x \in J^\alpha, i \ge 0 \}$$ When the dependence
on $\alpha$ is clear, we may denote these sets also by $J, J_*,
I_{i, j}, i, j = 1, 2$.

We will define now for a small $\alpha >0$, the skew product with overlaps in fibers
$f_\alpha: J_*^\alpha \times I \to J_*^\alpha \times I$
\begin{equation}\label{skew}
f_\alpha(x, y) = (g(x), h_\alpha(x, y)), \text{where}
\end{equation}
 $$h_\alpha(x, y)= \left\{ \begin{array}{ll}
                     x+\frac y2, \ x \in I_{11}^\alpha \\
                     1-x+\frac y2, \ x \in I_{21}^\alpha \\
                     1-\frac y2, \ x \in I_{12}^\alpha \\
                      \frac y2, \ x \in I_{22}^\alpha
                     \end{array}
           \right.
$$

We shall denote also the function $h_\alpha(x, \cdot): I \to I$ by
$h_{x, \alpha}$ for $x \in J_*^\alpha$. From the definition of
$h_\alpha(x, y)$ it can be seen that for $x \in J_*^\alpha \cap
I_1$, there are two images of intervals intersecting inside $\{x\}
\times I$, namely $h_{x_{-1}, \alpha}(I)$ and $h_{\tilde x_{-1},
\alpha}$, where $x_{-1}$ denotes the $g$-preimage of $x$ belonging
to $I_1^\alpha$, and $\tilde x_{-1}$ denotes the $g$-preimage of
$x$ belonging to $I_2^\alpha$.

We denote by
\begin{equation}\label{Lambda}
\Lambda(\alpha):= \mathop{\cup}\limits_{x \in J_*^\alpha}\mathop{\cap}\limits_{n \ge 0} \mathop{\cup}\limits_{y \in g^{-n}x
\cap J_*^\alpha} h_{y, \alpha}^n(I),
\end{equation}

where $h_{y, \alpha}^n:= h_{f^{n-1}y, \alpha} \circ \ldots \circ
h_{y, \alpha}, n \ge 0$. For $x \in J_*^\alpha$ we also denote by
$$\Lambda_x(\alpha):=\mathop{\cap}\limits_{n \ge 0}
\mathop{\cup}\limits_{y \in g^{-n}x \cap J_*^\alpha} h_{y,
\alpha}^n(I), $$ and call it \textbf{the fiber} of
$\Lambda(\alpha)$ over $x$. It is clear that $\Lambda(\alpha)$ is
a compact $f$-invariant set, but $\Lambda(\alpha)$ \textbf{is not
necessarily totally invariant} hence the number of
$f_\alpha$-preimages of a point from $\Lambda(\alpha)$, belonging
to $\Lambda(\alpha)$, may vary. The sets $\Lambda(\alpha),
\Lambda_x(\alpha)$ will be denoted simply by $\Lambda, \Lambda_x$
when dependence on $\alpha$ is clear.

We shall now prove that the fibers $\Lambda_x(\alpha), x \in
J_*^\alpha \cap I_1$ have an interesting property, namely they
contain a Cantor set of points which have two different
$f_\alpha$-preimages in $\Lambda(\alpha)$.

Before we proceed with the Theorem, let us remind the notion of
\textbf{thickness} of a Cantor set introduced by Newhouse
(\cite{N-carte}), and studied also for example in \cite{H},
\cite{K}, \cite{PT}.

Consider a Cantor set $K$ obtained as $I_0 \setminus
\mathop{\cup}\limits_{n\ge 1} U_n$, where $U_n$ are open
subintervals of $I_0$, called the \textbf{gaps} of $K$, and $I_0$
is the minimal interval containing $K$. Of course the gaps of $K$
can be ordered in many ways, and we call such an ordering
$\mathcal{U}=(U_n)_n$ a \textbf{presentation} of $K$. For a point
$u \in \partial U_n$, let $C$ be the connected component of $I_0
\setminus (U_1 \cup \ldots \cup U_n)$ which contains $u$; the
component $C$ is also called a \textbf{bridge} at $u$. For such a
point denote by $\tau(K, \mathcal{U}, u):=
\frac{\ell(C)}{\ell(U_n)}$ (see \cite{PT}), where $\ell(C)$
denotes the length of the subinterval $C$. Then the thickness of
$K$ is defined by $$ \tau(K) = \mathop{\sup}\limits_{\mathcal{U}}
\mathop{\inf}\limits_{u \in K}\tau(K, \mathcal{U}, u), $$ where
the infimum is taken over all the boundary points of finite gaps
of $K$, and the supremum is taken over all different presentations
of $K$. In fact it can be proved that the supremum in the
definition of thickness is attained for a presentation with
decreasing lengths of gaps, i.e so that $\ell(U_p) \le \ell(U_n)$
if $p \ge n$. Thickness is an important numerical invariant of a
Cantor set, and it is preserved by an affine transformation of the
interval. Newhouse showed that if $K_1, K_2$ are Cantor sets with
$\tau(K_1) \tau(K_2) >1$ and neither of them is contained in a gap
of the other (i.e they are \textbf{interleaved}), then $K_1 \cap
K_2 \ne \emptyset$ (see \cite{N-carte}).

We will use below the thickness in order to prove that the fibers
$\Lambda_x$ (defined in (\ref{Lambda})) contain "big"
intersections of certain Cantor sets.

\begin{thm}\label{Cantor-fibers}
In the above setting, for all points $x \in J_*^\alpha \cap I_1$, there exists a Cantor set of points $F_x(\alpha) \subset \Lambda_x(\alpha)$,
 so that each point from $F_x(\alpha)$ has two different $f$-preimages in $\Lambda$.
Also if $x \in J_*^\alpha \cap I_2^\alpha$, it follows that there is a Cantor set
$F_x(\alpha) \subset \Lambda_x(\alpha)$ such that each point of $F_x(\alpha)$ has more than one prehistory in $\hat \Lambda(\alpha)$.
\end{thm}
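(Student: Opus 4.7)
The plan is to exhibit $F_x(\alpha)$ as a Cantor subset of an intersection of two affine copies of fiber Cantor sets inside $\Lambda_x(\alpha)$, and then invoke the thickness-based intersection theorem of \cite{K} (a refinement of the Newhouse Gap Lemma from \cite{N-carte}).

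For $x \in J_*^\alpha \cap I_1^\alpha$, let $x_{-1} \in I_{11}^\alpha$ and $\tilde x_{-1} \in I_{21}^\alpha$ denote the two $g$-preimages of $x$ in $J_*^\alpha$. The recursion built into \eqref{Lambda} gives
\[
A := x_{-1} + \tfrac{1}{2}\,\Lambda_{x_{-1}}(\alpha) \subseteq \Lambda_x(\alpha), \qquad B := (1-\tilde x_{-1}) + \tfrac{1}{2}\,\Lambda_{\tilde x_{-1}}(\alpha) \subseteq \Lambda_x(\alpha),
\]
and any $y \in A \cap B$ admits the two distinct $f_\alpha$-preimages $(x_{-1}, 2(y-x_{-1}))$ and $(\tilde x_{-1}, 2(y-1+\tilde x_{-1}))$ in $\Lambda(\alpha)$. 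So it suffices to exhibit a Cantor subset of $A \cap B$. I would next bound from below the thickness of every fiber: expanding the recurrence $\Lambda_w = h_{v_1,\alpha}(\Lambda_{v_1}) \cup h_{v_2,\alpha}(\Lambda_{v_2})$ with $\{v_1, v_2\} = g^{-1}w \cap J_*^\alpha$ level by level, and using that each $h_{v_i,\alpha}$ is affine with contraction ratio $\tfrac12$, a direct induction (relying on \eqref{small}) produces at combinatorial depth $n$ new gaps of length of order $\alpha \cdot 2^{-n}$ between bridges of length of order $2^{-n}$. Ordering these gaps by decreasing length gives a presentation in which every bridge-to-gap ratio is at least $c/\alpha$ for an absolute $c > 0$, so $\tau(\Lambda_w(\alpha)) \geq c/\alpha$ uniformly in $w \in J_*^\alpha$. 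Since thickness is invariant under the affine maps $y \mapsto x_{-1}+y/2$ and $y \mapsto 1-\tilde x_{-1}+y/2$, we obtain $\tau(A)\cdot\tau(B) \geq c^2/\alpha^2 > 1$ for all sufficiently small $\alpha$.

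It then remains to check that $A$ and $B$ are interleaved. Their convex hulls are, up to terms of order $\epsilon(\alpha)$, the intervals $[\tfrac12, 1]$ and $[\alpha, \alpha+\tfrac12]$, which overlap in an interval of length approximately $\alpha$, inside which both sets exhibit their full fine Cantor structure; since every gap has length at most a constant times $\alpha$, neither set lies in a single gap of the other. The intersection theorem of \cite{K} (see also \cite{H}) then produces a Cantor set $F_x(\alpha) \subseteq A \cap B \subseteq \Lambda_x(\alpha)$, whose points have two distinct $f_\alpha$-preimages in $\Lambda(\alpha)$. For $x \in J_*^\alpha \cap I_2^\alpha$, the formulas for $h_\alpha$ on $I_{12}^\alpha$ and $I_{22}^\alpha$ force each $(x,y) \in \Lambda(\alpha)$ with $y \geq \tfrac12$ to have a unique $f_\alpha$-preimage $(x_0, 2(1-y))$ with $x_0 \in I_{12}^\alpha \subseteq I_1^\alpha$; applying the previous case to $x_0$ and pushing forward via $h_{x_0,\alpha}(y) = 1 - y/2$ gives
\[
F_x(\alpha) := 1 - \tfrac{1}{2}\,F_{x_0}(\alpha) \subseteq \Lambda_x(\alpha),
\]
a Cantor set each of whose points has unique preimage lying in $F_{x_0}(\alpha)$, which itself admits two distinct $f_\alpha$-preimages in $\Lambda(\alpha)$, thus producing two distinct prehistories in $\hat\Lambda(\alpha)$.

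The main obstacle is the thickness bound $\tau(\Lambda_w(\alpha)) \geq c/\alpha$: all the quantitative content lies in the level-by-level bookkeeping of the newly created gaps (of size $O(\alpha \cdot 2^{-n})$, triggered by the defect \eqref{small} in the positions of $b_1(\alpha), b_3(\alpha)$) versus the persistent bridges (of size $O(2^{-n})$), showing that the $\tfrac12$-contraction is large enough compared to $\alpha$ for the thickness product to exceed one in every fiber. Once this estimate is established, the interleaving check, the affine invariance of thickness, and the reduction from $I_2^\alpha$ to $I_1^\alpha$ are routine.
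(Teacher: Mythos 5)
Your proposal follows essentially the same route as the paper: a uniform lower bound of order $1/\alpha$ on the thickness of every fiber, obtained by inductively tracking bridges and gaps of $\Lambda_x(n)$, then the decomposition of $\Lambda_x$ (for $x\in I_1\cap J_*$) into the two interleaved affine images of the fibers over the two $g$-preimages, Kraft's (and Hunt--Kan--Yorke's) thick-intersection theorem to extract the Cantor set $F_x$ of points with two $f$-preimages, and a pullback through $h_{x_{-1,1}}$ to settle the case $x\in I_2\cap J_*$. The only inaccuracies are harmless bookkeeping details: up to $O(\alpha^2)$ the hull of $A$ is $[\tfrac12+\tfrac{\alpha}{2},1]$ and that of $B$ is $[\alpha,\tfrac12+\tfrac{3\alpha}{4}]$ (so the overlap has length about $\alpha/4$, and the order-$\alpha$ gaps come from $I_2$ sitting near $1-\alpha$, not from the $\epsilon(\alpha)<\alpha^2$ defect of $b_1,b_3$), and Kraft's Cantor-set conclusion requires his condition on the pair $(\tau_1,\tau_2)$ rather than merely $\tau_1\tau_2>1$ --- which is satisfied here since both thicknesses are of order $1/\alpha$.
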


\begin{proof}

Fix a small positive $\alpha$; we will work with the corresponding
$I_j, f, \Lambda, h$ for this fixed $\alpha$, without recording
their dependence on it. From the construction of the subintervals
$I_{ij}, i, j = 1, 2$, we see that $I_{11}$ and $I_{12}$ are close
to $\frac 12$, while $I_{21}, I_{22}$ are close to $1-\alpha$
(recall that $\alpha>0$ was taken very small).

For a point $x \in J_*$, denote by $x_{-1, 1}, x_{-1, 2}$ the two $g$-preimages
of $x$ in $J_*$, i.e $x_{-1, 1} \in I_1, x_{-1, 2} \in I_2, g(x_{-1,
1}) = g(x_{-1, 2}) = x$.

At the first iteration, if $x \in I_1$, we obtain $\Lambda_x(1):=
h_{x_{-1, 1}}(I) \cup h_{x_{-1, 2}}(I) = [\alpha, c_1]$, with $c_1 = \frac 12 + x_{-1, 1}\approx 1$; indeed $x_{-1, 1} \in [a, c]$ and $|a - \frac 12| < \alpha$. If $x \in I_2$, then $\Lambda_x:=
h_{x_{-1, 1}}(I) \cup  h_{x_{-1, 2}}(I) = [0, 1]$, where again
$x_{-1, 1}, x_{-1, 2}$ are the two $g$-preimages of $x$, one in
$I_1$ and the other in $I_2$.

For the second iteration, we obtain $\Lambda_x(2):= h_{x_{-1,
1}}(\Lambda_{x_{-1, 1}}(1))  \cup  h_{x_{-1, 2}}(\Lambda_{x_{-1,
2}}(1)) = [\alpha, c_2]$, where $c_2 = x_{-1, 1} + c_1/2 \approx
\frac{1+ c_1}{2}$. For $x \in I_2$, we have $\Lambda_x(2):=
h_{x_{-1, 1}}(\Lambda _{x_{-1, 1}}(1)) \cup h_{x_{-1,
2}}(\Lambda_{x_{-1, 2}}(1)) = [0, \frac 12] \cup [1-\frac{c_1}{2},
1 - \frac{\alpha}{2}]$. This is hinting already on the nature of
the gaps of the fiber $\Lambda_x$, which is obtained as an
intersection $\Lambda_x = \mathop{\cap}\limits_{n \ge 1}
\Lambda_x(n)$. There will be gaps coming from the successive
iterations of the gap $(\frac 12, 1-\frac{c_1}{2})$.

At the third iteration, we see that for $x \in J_* \cap I_1$, $\Lambda_x(3):= h_{x_{-1, 1}}(\Lambda_{x_{-1, 1}}) \cup h_{x_{-1, 2}}(\Lambda_{x_{-1, 2}}) = [\alpha, c_3]$, where $c_3 = x_{-1, 1} + \frac{c_2}{2}$ and $x_{-1, i}$ is the $g$-preimage of $x$ belonging to $I_i, i = 1, 2$.
And for $x \in J_* \cap I_2$, we have $\Lambda_x(3) = h_{x_{-1, 1}}(\Lambda_{x_{-1, 1}}(2)) \cup h_{x_{-1, 2}}
(\Lambda_{x_{-1, 2}}(2)) = [0, \frac 14] \cup [\frac 12 - \frac{c_1}{4}, \frac 12 - \frac{\alpha}{4}]
\cup [1- \frac{c_2}{2}, 1 - \frac{\alpha}{2}]$.
At this iteration, we have therefore the gaps $(\frac 14, \frac 12 - \frac{c_1}{4})$ and $(\frac 12 - \frac{\alpha}{4}, 1 - \frac{c_2}{2})$, having lengths $\frac 14 - \frac{c_1}{4}$ and $\frac 12 - \frac{c_2}{2} + \frac{\alpha}{4}$ respectively. These lengths are very small in comparison to the lengths of their associated left and right bridges $C$. We can say that $\ell(U_j(3)) \le \Delta(\alpha)^{-1} \ell(J)$, where $J$ is one of the component intervals of $\Lambda_x(3)$ and $U_j(3)$ is one of the gaps between two consecutive subintervals $J$ of $\Lambda_x(3)$, and where $\Delta(\alpha) = O(\frac 1 \alpha)$. So $\Delta(\alpha) \to \infty$ when $\alpha \searrow 0$.

At the fourth iteration we see gaps forming inside $\Lambda_x(4)$
for $x \in J_* \cap I_1$ as well, i.e $\Lambda_x(4)$ contains
several disjoint closed subintervals. For $x \in J_* \cap I_1$, we
have $\Lambda_x(4) = h_{x_{-1, 1}} (\Lambda_{x_{-1, 1}}(3)) \cup
h_{x_{-1, 2}}(\Lambda_{x_{-1, 2}}(3))$; so from the above
calculations we obtain $\Lambda_x(4) = [x_{-1,1} + \alpha/2,
x_{-1, 1} + c_3/2] \cup [1-x_{-1, 2}, 1-x_{-1, 2} + \frac 18] \cup
[1-x_{-1, 2} + \frac 14 -\frac{c_1}{8}, 1-x_{-1, 2} + \frac 14 -
\frac{\alpha}{8}] \cup [1-x_{-1, 2} + \frac 12 - \frac{c_2}{4},
1-x_{-1, 2} + \frac 12 - \frac{\alpha}{4}]$. We see thus that
there are gaps forming this time in $\Lambda_x(4)$, and that still
$\ell(J)/\ell(U) \ge \Delta(\alpha)$, for any component $J$ of
$\Lambda_x(4)$ and adjacent gap $U$ between two consecutive $J$'s.
If $x \in J_* \cap I_2$, then we obtain $\Lambda_x(4)$ as before
and we see more gaps forming, still having the property that
$\ell(J)/\ell(U) \ge \Delta(\alpha)$ for any component subinterval (bridge)
$J$ and adjacent gap $U$.

Assume now that at iteration $n$, $\Lambda_x(n) = J_1(x, n) \cup \ldots J_{k(n)}(x, n)$, where this is an ordered union
 of mutually disjoint closed subintervals, i.e the right endpoint of $J_k(x, n)$ is strictly less than the left endpoint
 of $J_{k+1}(x, n), k = 1, \ldots, k(n)-1$. Denote $J_k(x, n) = [a_k(x, n), c_k(x, n)], k = 1, \ldots, k(n)$ if $x \in I_1$, and $J_k(x, n) = [\tilde a_k(x, n), \tilde c_k(x, n)], k = 1, \ldots, k(n)$ for $x \in I_2$. Then we will show that there must exist points from $\Lambda_x$ as close as we
 want to the endpoints of each $J_k(x, n)$.
Indeed we know from above that there are gaps of $\Lambda_y$ for $y \in J_* \cap I_2$, as close as we want to 0.
But if $x \in I_1$ and $x_{-1, 2}$ is its $g$-preimage in $I_2$, we have that $h_{x_{-1, 2}}$ takes $\Lambda_{x_{-1, 2}}$ into $\Lambda_{x}$, so there exist points of $\Lambda_{x}$ as close as we want to the left endpoint of $J_1(x, n), n >1$.
Again for $x \in I_1$, and the $g$-preimage $x_{-1, 1} \in I_1$ of $x$, points $z$ in $\Lambda_{x_{-1, 1}}$ are taken by $h_{x_{-1, 1}}$ into points of type $x_{-1, 1}+\frac {z}{2}$; if we repeat the procedure, we see that there are points from $\Lambda_x$ as close as we want to the right endpoint of $J_{k(n)}(x, n)$. Now from the fact that there are points of $\Lambda_{x_{-1,1}}$ as close as we want to the left endpoint of intervals of type $J_1(x_{-1, 1}, n)$, we see that by applying $g_{x_{-1, 1}}$ we obtain points from $\Lambda_{x}$ as close as we want to the right endpoint of $J_{k(n)}(x, n)$ if $x \in I_2$.

But now by recalling that all the intervals of type $J_k(x, n)$ are obtained by applying repeatedly $h_y$ (for preimages $y$ of $x$), we obtain that there are points from $\Lambda_x$ as close as we want to each of the endpoints of the subintervals $J_k(x, n)$ obtained at step $n$. This procedure  tells us that indeed, we can use the subintervals $J_k(x, n)$ as bridges in the construction of the respective Cantor set, since the gaps between them do not extend inside any of $J_k(x, n)$. Thus we can use the lengths of the subintervals of type $J_k(x, n), k = 1, \ldots k(n)$ and the lengths of the gaps between them, in the calculation of the thickness of $\Lambda_x$.

We also notice the following property: assume that $\Lambda_x(n) =
J_1(x, n) \cup \ldots \cup J_{k(n)}(x, n)$ for $x \in J_*$, where
these disjoint subintervals are arranged in increasing order. Then
if $x \in J_* \cap I_1$, we claim that the left endpoint of
$J_1(x, n)$, i.e $a_1(x, n)$ is $\epsilon$-close to $\alpha$ where
for our fixed $\alpha$, $\epsilon$ denotes the positive number
$\epsilon(\alpha)$ defined in (\ref{small}). Also if $x \in J_*
\cap I_2$, we claim that the left endpoint of $J_1(x, n)$, i.e
$\tilde a_1(x, n)$ is 0 and the right endpoint of $J_{k(n)}(x,
n)$, i.e $\tilde c_{k(n)}(x, n)$ is $\epsilon(\alpha)$-close to $1
- \frac{\alpha}{2}$, for every $n \ge 1$. We saw this property for
iterations 1 through 4, let us prove it in general by induction.
Assume it is satisfied at step $n$ for $\Lambda_x(n), \forall x
\in J_*$. Then at step $n+1$, if $x \in I_1$ we have
$\Lambda_x(n+1) = h_{x_{-1, 1}}(\Lambda_{x_{-1, 1}}(n)) \cup
h_{x_{-1, 2}}(\Lambda_{x_{-1, 2}}(n))$;
 so $J_1(x, n+1) = 1- x_{-1, 2} + \frac 12 \cdot J_1(x_{-1, 2}, n)$. Since by induction the left endpoint
 of $J_1(x_{-1, 2}, n)$ is 0, we see that the left endpoint of $J_1(x, n+1)$ is $1-x_{-1, 2}$.
 But $x_{-1, 2} \in I_{21}$ is $\epsilon$-close to $1-\alpha$, hence the left endpoint of
 $J_1(x, n+1)$ is $\epsilon$-close to $\alpha$. Now if $x \in I_2$, we see that the left endpoint
 $J_1(x, n+1)$ is obtained from applying $y \to y/2$ to the interval $J_1(x_{-1, 2}, n)$, thus it is equal to 0.
 And the right endpoint of $J_{k(n)}(x, n+1)$ is obtained from applying the map $y \to 1-y/2$ to $J_1(x_{-1, 1}, n)$;
 thus the right endpoint of $J_{k(n)}(x, n+1)$ is $\epsilon$-close to $1-\frac{\alpha}{2}$.

We have to see also what is happening to the subintervals that
overlap in $\Lambda_x(n+1)$ for $x \in I_1$. The overlap is between
the points $x_{-1, 1} + \frac{a_1(x_{-1, 1}, n)}{2}$ and $1- x_{-1, 2}+\frac{\tilde c_{k(n)}(x_{-1, 2}, n)}{2}$.
However we saw above that $a_1(x_{-1, 1}, n)$ is $\epsilon$-close to $\alpha$ (recall that $\epsilon = \epsilon(\alpha)$);
 and that $\tilde c_{k(n)}(x_{-1, 2}, n)$ is $\epsilon$-close to $1 - \frac{\alpha}{2}$. We recall also that $I_1, I_2$ are $\epsilon$-close to $\frac 12$ and $1-\alpha$ respectively. Thus the overlap mentioned above is between points that are $\epsilon$-close to $\frac 12 + \frac \alpha 2$ and $\alpha + \frac 12 - \frac \alpha 4 = \frac 12 + \frac{3\alpha}{4}$. As for all $\alpha>0$ small we have $\frac 12 + \frac{\alpha}{2} < \frac 12 + \frac{3\alpha}{4}$, we see that indeed we have overlaps inside $\Lambda_x(n), x \in J_* \cap I_1, n >1$.
Denote by $J_{int}$ this maximal overlap at the third
Iteration. Then $J_{int}$ comes from applying $h_{x_{-1, 1}}$ to
the intervals in the last half of  $\Lambda_{x_{-1, 1}}(n-1)$ and
from applying $h_{x_{-1, 2}}$ to the intervals in the first half
of $\Lambda_{x_{-1, 2}}(n-1)$. However we noticed that, when $y \in I_1 \cap J_*$, the
component intervals of $\Lambda_y(n-1)$ outside $[1/2 + \alpha/2,
1/2 + 3\alpha/4]$ contain each a Cantor set, obtained successively by
eliminating a fixed proportion of the intervals at step $n-1$. Thus there exists $n$ large enough so that, for any two points
$\xi, \zeta \in \Lambda_x \cap J_{int}$, there exists a gap $U_1$ in $\Lambda_{x_{-1, 1}}(n-1)$
and a gap $U_2$ in $\Lambda_{x_{-1, 2}}(n-1)$ with $h_{x_{-1,
1}}(U_1) \cap h_{x_{-1, 2}}(U_2)$ non-empty and situated between
$\zeta$ and $\xi$. Obviously $n$ depends on the distance between
$\xi$ and $\zeta$.

We saw above that there are points from $\Lambda_x$ as close as we
wish to the endpoints of the subintervals $J_k(x, n), k = 1, \ldots, k(n)$ of $\Lambda_x(n), n \ge 1$. Therefore these subintervals can be used as bridges $J$ and the intervals between them as gaps $U$, in the Cantor set construction of $\Lambda_x, x \in J_*$. By induction we also see that at each step $n$ we have

$$\ell(J_k(x, n))/\ell(U) \ge \Delta(\alpha), k = 1, \ldots,
k(n)$$

for any subinterval $J_k(x, n)$ of $\Lambda_x(n)$ and any corresponding adjacent gap $U$ of $J_k(x, n)$ (where we say that $U$ is an \textbf{adjacent gap} for $J_k(x, n)$ if it
is immediately at the left or at the right of $J_k(x, n)$ at step
$n$).
But this ratio between a subinterval and its adjacent gap is
preserved by linear transformations, like the ones we deal with in
$h_x, x \in J_*$. By overlapping two subintervals (as it may
happen in $\Lambda_x(n), x \in J_* \cap I_1$), we can only
increase the lengths of bridges $J$ and decrease the lengths of
adjacent gaps $U$. Thus we see by induction that at each step
\begin{equation}\label{delta}
\ell(J)/\ell(U) \ge \Delta(\alpha),
\end{equation}

for each bridge subinterval $J$ and adjacent gap $U$ of $\Lambda_x(n)$.
Since by applying iterations we cut in half the length of the gaps between the subintervals $J_k(x, n), k = 1, \ldots, k(n)$ at step $n$ (or decrease them by an even larger factor), we obtain that the gaps are ordered decreasingly when $n \nearrow \infty$; this presentation of $\Lambda_x$ will be denoted by
$\mathcal{U}_{step}$. And from the observation made when we
defined the thickness of a Cantor set, we have that
$\tau(\Lambda_x) = \mathop{\inf}\limits_{\xi \in \Lambda_x}
\tau(\Lambda_x, \mathcal{U}_{step}, \xi)$. But from (\ref{delta})
we notice that

\begin{equation}\label{tau}
\tau(\Lambda_x) \ge \Delta(\alpha), x \in J_*
\end{equation}

Now let $x \in J_* \cap I_1$; then $\Lambda_x = h_{x_{-1,
1}}(\Lambda_{x_{-1, 1}}) \cup h_{x_{-1, 2}}(\Lambda_{x_{-1, 2}})$,
where $x_{-1, 1} \in g^{-1}(x) \cap I_1$ and $x_{-1, 2} \in
g^{-1}(x) \cap I_2$. It is easy to see that the two Cantor sets
$S_1(x):= h_{x_{-1, 1}}(\Lambda_{x_{-1, 1}})$ and $S_2(x) :=
h_{x_{-1, 2}}(\Lambda_{x_{-1, 2}})$ are interleaved, i.e neither
set is contained in a gap of the other. Also from (\ref{tau}) we
know that $\tau(S_1(x)) \ge \Delta(\alpha)$ and $\tau(S_2(x)) \ge
\Delta(\alpha)$.

We recall that $\Delta(\alpha) \mathop{\to}\limits_{\alpha \to 0} \infty$.
 Thus if $\tau_1:= \tau(S_1(x)), \tau_2 :=
\tau(S_2(x))$ and $\alpha$ is small enough, we can check that $\tau_1 \tau_2 >1$ and in addition
that $(\tau_1, \tau_2) \in B$, where: $$ B:= \{(\tau_1, \tau_2),
\tau_1 > \frac{\tau_2^2+3\tau_2+1}{\tau_2^2} \ \text{or} \ \tau_2
> \frac{\tau_1^2+3\tau_1+1}{\tau_1^2}\} \cap \{(\tau_1, \tau_2),
\tau_1> \frac{(1+2\tau_2)^2}{\tau_2^3} \ \text{or} \ \tau_2 >
\frac{(1+2\tau_1)^2}{\tau_1^3}\} $$

Since $S_1(x), S_2(x)$ are not interleaved and we do have the above conditions satisfied for $\Delta(\alpha)$ large enough (i.e for $\alpha$ small enough), we obtain from \cite{K} that there exists indeed a Cantor set $F_x$ in $S_1(x) \cap S_2(x)$.
Moreover from \cite{H} (page 882), it follows that we have:
$$\tau(F_x) \ge \sqrt{\text{min}\{\tau(S_1(x)),
\tau(S_2(x))\}} \ge \sqrt{\Delta(\alpha)})$$

From the definition of the sets $S_1(x), S_2(x)$ we see that for every $x \in I_1 \cap J_*$, each point from the set $F_x$ has two different $f$-preimages belonging to $\Lambda$.

Now if $x \in J_* \cap I_2$, there exists a $g$-preimage $x_{-1, 1} \in I_1$ of $x$ and then in the fiber $\Lambda_{x_{-1, 1}}$ there must exist a Cantor set $F_{x_{-1, 1}}$ of points having two distinct $f$-preimages in $\Lambda$. So we obtain that there exists a Cantor set $F_x:= h_{x_{-1, 1}}(\Lambda_{x_{-1, 1}})$ of points with two different $f^2$-preimages in $\Lambda$.
The Theorem is then proved.

\end{proof}

In fact we will prove in Corollary \ref{infinite-dir} that there are points with infinitely many prehistories in $\hat \Lambda$.
Let us prove now the hyperbolicity of $f$ as an endomorphism on the basic set $\Lambda$. We will show that this set is of saddle type, i.e $f$ has both stable and unstable directions on it.

Although it may appear at a first sight that $f$ is expanding horizontally, the calculation on derivatives shows this to be false. Indeed we have the derivative of $f$,
$$
Df(x, y) = \left(\begin{array}{ll}
                    g'(x) & 0 \\
                    \partial_x h(x, y) & \partial_y h(x,y)
                  \end{array} \right),
$$ where $\partial_x h(x, y)$ represents the partial derivative of
$h$ with respect to $x$ at the point $(x, y)$. Thus for a vector $
\bar w = (0, v) \in \mathbb R \times \mathbb R$, we get $Df(x,
y)\cdot \bar w = \left(\begin{array}{cc}
                                 0 \\
                                  \partial_y h(x, y) v
                                \end{array} \right) $,
so the vector space $\{(0, v) \in \mathbb R \times \mathbb R\}$ is invariant and $Df$ is contracting on vertical lines; these vertical lines represent therefore the stable tangent subspaces.

However if we consider the horizontal vector $\bar w = (\zeta, 0) \in \mathbb R \times \mathbb R$, then
$Df(x, y) \cdot \bar w = \left(\begin{array}{cc}
                                     g'(x) \cdot \zeta \\
                                     \partial_x h(x, y) \cdot\zeta
                                    \end{array} \right)$,
so the horizontal line $\{(\zeta, 0), \zeta \in \mathbb R, 0 \in
\mathbb R\}$ is not invariated by $Df$, and thus the unstable
spaces do not have to be equal to this line.

To prove the hyperbolicity of the non-invertible map $f$ on $\Lambda$ and to construct its unstable spaces, we use a generalization
of a theorem of Newhouse (\cite{N-art}) to this endomorphism case.

Let $f:M \to M$ be a smooth, not
necessarily invertible map and suppose that $\Lambda$ is a
compact $f$-invariant set in $M$. Assume that there exists a field
of cones in the tangent space,
 $\mathcal{C} = \{C_{\hat z}\}_{\hat z \in \hat \Lambda}$, so that the dimension of the core
 linear space of $C_{\hat z}$ is constant on $\hat \Lambda$; but the cone field $\mathcal{C}$ is not
 necessarily assumed to be $Df$-invariant.
Let us say that a function $f$ is \textbf{expanding and
co-expanding on the cone field} $\mathcal{C}$, if given the
notations:

$$ m_{\mathcal{C}, \hat z}(f):= \mathop{\inf}\limits_{v \in
C_{\hat z}, v \ne 0}\frac{|Df_z v|}{|v|}, \text{and} \ \ m_{\mathcal{C}, \hat z}'(f):= \mathop{\inf}\limits_{v \notin
C_{\hat f \hat z}} \frac{|Df^{-1}_{fz}v|}{|v|}, \hat z \in \hat
\Lambda, $$

we have that $\mathop{\inf}\limits_{\hat z \in
\hat \Lambda} m_{\mathcal{C}, \hat z}(f) > 1$, and
$\mathop{\inf}\limits_{\hat z \in \hat \Lambda} m_{\mathcal{C},
\hat z}'(f) > 1$.

\begin{unthm}[Newhouse]\label{cone}
In the above setting, assume that there exists an integer $N \ge 1$ such that $f^N$ is expanding and co-expanding on $\mathcal{C}$; then it follows that $f$ is hyperbolic on $\Lambda$.
\end{unthm}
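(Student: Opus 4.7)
The plan is to adapt the standard cone-field proof of hyperbolicity to the non-invertible setting, where cones and unstable subspaces are indexed by prehistories rather than by points of $\Lambda$. First I would reduce to $N=1$: if $f^N$ admits a hyperbolic splitting on $\hat\Lambda$, then the same splitting is automatically $Df$-invariant (since $f$ commutes with $f^N$, and $\|Df\|$ is bounded on compact $M$), and one obtains uniform one-step expansion/contraction by passing to an adapted Riemannian metric that averages norms over the first $N$ iterates. Next, I would upgrade the hypotheses to genuine $Df$-invariance of the cone field $\mathcal{C}$ and its complement. Concretely, if $v\in C_{\hat z}$ satisfied $Df_z v\notin C_{\hat f\hat z}$, then co-expansion at $\hat z$ applied to $w=Df_z v$ would give $|v|=|Df^{-1}_{fz}w|>m'_{\mathcal{C},\hat z}(f)\,|Df_z v|$, while expansion gives $|Df_z v|>m_{\mathcal{C},\hat z}(f)\,|v|$; multiplying yields $mm'<1$, contradicting $m,m'>1$. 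The same contradiction dualized shows $Df^{-1}_{fz}(T_{fz}M\setminus C_{\hat f\hat z})\subset T_zM\setminus C_{\hat z}$.

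With genuine invariance of $\mathcal{C}$ and its complement in hand, I would construct the hyperbolic splitting by pushing forward cones along prehistories and pulling back complementary cones along forward orbits. Define
$$E^u_{\hat x} \;=\; \bigcap_{n\ge 0} Df^n_{x_{-n}}\bigl(C_{\hat f^{-n}\hat x}\bigr) \;\subset\; T_xM.$$
Forward invariance of $\mathcal{C}$ makes this a nested family of cones at $x$, and uniform expansion of $Df$ inside $\mathcal{C}$ forces their cross-sections to shrink at a definite exponential rate, producing a linear subspace of dimension equal to the common core dimension of $\mathcal{C}$; the shift-equivariance $Df(E^u_{\hat x})=E^u_{\hat f\hat x}$ and uniform $Df$-expansion on $E^u_{\hat x}$ are then immediate. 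Dually, choose any prehistory $\hat x$ over $x$ and set
$$E^s_x \;=\; \bigcap_{n\ge 0} (Df^n_x)^{-1}\bigl(\overline{T_{f^n x}M\setminus C_{\hat f^n\hat x}}\bigr) \;\subset\; T_xM.$$
Complementary invariance makes this nested, and co-expansion pinches the cross-sections to a subspace of complementary dimension on which $Df$ is uniformly contracting, with $Df(E^s_x)\subset E^s_{fx}$. Transversality $E^u_{\hat x}\oplus E^s_x=T_xM$ follows because the two subspaces lie in the strict interior of $C_{\hat x}$ and of its complement respectively, and the dimensions add up to $\dim M$; continuity of the splitting in $\hat x$ and in $x$ is a consequence of the uniform rate at which the cross-sections pinch.

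The main obstacle I anticipate is showing that $E^s_x$ is intrinsic to $x$ and does not depend on the auxiliary prehistory $\hat x$ used in its construction, a point that is essential since Ruelle-type hyperbolicity assigns a single stable subspace to each point of $\Lambda$, while the cones $C_{\hat f^n\hat x}$ at $f^n x$ genuinely depend on the backward tail of $\hat x$. I would resolve this by identifying $E^s_x$ with the intrinsic set $\{v\in T_xM : |Df^n v|\to 0 \text{ as } n\to\infty\}$: the co-expansion inequality ensures that every vector in the cone intersection above is forward-contracted at a definite exponential rate, so $E^s_x$ is contained in this intrinsic set; a dimension comparison, using the already-constructed unstable subspaces along the forward orbit, then forces equality. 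Once this intrinsic characterization is in place, independence from the choice of $\hat x$ is automatic, and the splitting $T_{\hat x}M=E^s_x\oplus E^u_{\hat x}$ satisfies all the requirements of hyperbolicity in the sense of Ruelle.
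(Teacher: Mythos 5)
The paper itself does not prove this theorem (it only points to Newhouse's cone-field paper \cite{N-art}), and your outline follows the same standard route that citation refers to: deduce invariance of the cone field, define $E^u_{\hat x}$ as the intersection of pushed-forward cones along the prehistory and $E^s_x$ via pulled-back complementary cones, then check expansion, contraction, transversality and continuity. Two parts of your plan are genuinely right and worth keeping: the observation that expansion together with co-expansion forces $Df_z(C_{\hat z})\subset C_{\hat f\hat z}$ (the product estimate $m\,m'\le 1$ is correct), and your identification of the real endomorphism-specific subtlety, namely that $E^s_x$ must be shown independent of the auxiliary prehistory, which you resolve sensibly through the intrinsic characterization $\{v:\ |Df^n v|\to 0\}$ plus a dimension count.

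The genuine gap is in the pinching step, which is the technical heart of the theorem. You assert that ``uniform expansion of $Df$ inside $\mathcal{C}$ forces the cross-sections to shrink at a definite exponential rate''; as stated this is false. Take $Df=2\,\mathrm{Id}$ on $\mathbb{R}^2$ and the cone $C=\{(v,w):|w|\le |v|\}$: the cone is invariant and uniformly expanded, yet $\bigcap_n Df^n(C)=C$ is not a line (of course co-expansion fails here, which is exactly the point). The collapse of $\bigcap_{n}Df^n\bigl(C_{\hat f^{-n}\hat x}\bigr)$ to a $k$-dimensional subspace comes only from playing the two inequalities against each other: split a vector of $C_{\hat f^{-n}\hat x}$ into a core component and a complementary component, use co-expansion to contract the complementary part by $1/m'$ per step for as long as its forward images stay outside the cones (absorbing it into a later pushed-forward cone if it ever enters one), while the core part is expanded by $m$; iterating this recursion gives the exponential bound on the distance of unit vectors of $Df^n(C_{\hat f^{-n}\hat x})$ to $Df^n(\mathrm{core})$, and hence also the continuity you want. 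The same remark applies to your claim that ``co-expansion pinches'' the stable cones: each side needs both hypotheses (this is precisely the domination mechanism, handled in \cite{N-art} via adapted quadratic forms). Without this argument the existence of the subspaces $E^u_{\hat x}$ and $E^s_x$ is simply not established. A minor related fix: transversality should be argued through growth rates (a nonzero $v\in E^u_{\hat x}\cap E^s_x$ would satisfy $|Df^n v|\ge m^n|v|$ and $|Df^n v|\le (m')^{-n}|v|$ simultaneously), since $E^u_{\hat x}$ need not lie in the strict interior of the cone; and in the reduction from $f^N$ to $f$ you should say explicitly that the splitting for $f^N$ is characterized by these growth rates, which is what makes it $Df$-invariant before you introduce the adapted metric.
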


The proof is similar to the one given in \cite{N-art}.
We can prove consequently the following result of hyperbolicity for our skew product:

\begin{pro}\label{hyperbolic}

In the above setting, i.e with $g:J_* \to J_*$ expanding and $h:I_*
\times I \to J_* \times I$ contracting in the second coordinate
over the invariant set $\Lambda$ defined above, we
have that $f(x, y) = (g(x), h(x, y))$ is uniformly hyperbolic as an endomorphism on $\Lambda$.
\end{pro}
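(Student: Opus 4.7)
The plan is to apply the Newhouse-type cone criterion stated just above to a suitable horizontal cone field over $\hat\Lambda$. The derivative of our skew product has the triangular form
$$Df(x,y)=\begin{pmatrix} g'(x) & 0 \\ \partial_x h(x,y) & \partial_y h(x,y) \end{pmatrix},$$
and inspection of the four branches in the definition of $h_\alpha$ gives $|g'(x)|>\beta(\alpha)\gg 1$, $|\partial_y h(x,y)|=\tfrac12$, and $|\partial_x h(x,y)|\in\{0,1\}$. I propose the constant horizontal cone
$$C_{\hat z}=\{(u,v)\in \mathbb R^2:|v|\le \gamma|u|\},\qquad \hat z\in\hat\Lambda,$$
whose core is the one-dimensional horizontal axis; here $\gamma>0$ will be chosen large. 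By the theorem, it is enough to produce an integer $N\ge 1$ for which $f^N$ is both expanding and co-expanding on $\mathcal C$.

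For expansion, take $\bar w=(u,v)\in C_{\hat z}$. A direct computation gives $|Df\bar w|\ge |g'(x)||u|\ge \beta(\alpha)|\bar w|/\sqrt{1+\gamma^2}$. Moreover, the ratio of the second to the first component of $Df\bar w$ is at most $(1+\gamma/2)/\beta(\alpha)$, which is $\le \gamma$ once $\beta(\alpha)$ is large, so $\mathcal C$ is in fact $Df$-invariant for $\alpha$ small. Iterating yields $m_{\mathcal C,\hat z}(f^N)\ge (\beta(\alpha)/\sqrt{1+\gamma^2})^N$, which exceeds $1$ for $N$ large (and already for $N=1$ if $\beta(\alpha)$ is large enough).

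For co-expansion, $Df$ is invertible because $\det Df=g'(x)\,\partial_y h(x,y)\ne 0$, and
$$(Df)^{-1}=\begin{pmatrix} 1/g'(x) & 0 \\ -\partial_x h/(g'\,\partial_y h) & 1/\partial_y h \end{pmatrix}.$$
For $\bar w=(u,v)\notin C_{\hat f\hat z}$, i.e.\ $|v|>\gamma|u|$, the second component of $(Df)^{-1}\bar w$ has modulus at least $2|v|-2|u|/\beta(\alpha)\ge 2|v|(1-1/(\gamma\beta(\alpha)))$, while $|\bar w|\le |v|\sqrt{1+\gamma^{-2}}$. A short check shows that the complementary cone $\{|v|>\gamma|u|\}$ is itself $(Df)^{-1}$-invariant for large $\gamma$ and large $\beta(\alpha)$, so composing $N$ pullbacks gives $m'_{\mathcal C,\hat z}(f^N)$ comparable to $\bigl(2(1-1/(\gamma\beta(\alpha)))\bigr)^N/\sqrt{1+\gamma^{-2}}$, which is $>1$ for any $N\ge 1$ once $\gamma$ is large and $\alpha$ is small.

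The main subtlety is that the cited theorem does not require $\mathcal C$ to be $Df$-invariant, and the co-expansion hypothesis is phrased via the backward derivative at the image $\hat f\hat z$; this flexibility is convenient here, though in our situation both the horizontal cone and its complement happen to be invariant. With $\gamma$ first chosen large enough that the co-expansion factor exceeds $1$, and then $N$ chosen so that $\beta(\alpha)^N>\sqrt{1+\gamma^2}$ for the expansion estimate, the Newhouse criterion applies and yields uniform hyperbolicity of $f_\alpha$ on $\Lambda(\alpha)$. The stable subspace at each point is the vertical line identified in the discussion preceding the proposition, and the unstable subspace at a prehistory $\hat z\in\hat\Lambda$ is obtained as the usual nested intersection $E^u_{\hat z}=\bigcap_{n\ge 0} Df^n_{z_{-n}}(C_{\hat f^{-n}\hat z})$ of pullbacks of the horizontal cone along the prehistory.
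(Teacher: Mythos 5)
Your proof is correct and takes essentially the same route as the paper: apply the Newhouse cone criterion to the horizontal cone field $C_{\hat z}=\{(u,v):|v|\le\gamma|u|\}$ and verify expansion and co-expansion for a high iterate $f^N$, then extract $E^u_{\hat z}$ as the nested intersection of pushed-forward cones. The main organizational difference is that you first establish invariance of the cone and its complement under $Df$ and $Df^{-1}$ respectively and then iterate the one-step estimates, whereas the paper computes $\partial_x h_N$ via the recursion (\ref{der})--(\ref{in}), plugs it into the explicit $D(f^N)^{-1}$, and chooses $\gamma$ close to $\sqrt{\beta^{2N}-1}$; the paper's version also uses only the general bounds $|\partial_y h|<\delta<1$, $|\partial_x h|\le K$, so it carries over verbatim to the family (\ref{skew-general}) in Theorem \ref{general}(b), whereas you specialize to $|\partial_y h|\equiv\frac12$ and $|\partial_x h|\in\{0,1\}$ from (\ref{skew}) (harmless here, since that is the case the proposition concerns). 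A small imprecision that does not affect the conclusion: the displayed iterated expansion bound $(\beta/\sqrt{1+\gamma^2})^N$ is coarser than needed, since $\sqrt{1+\gamma^2}$ only enters once when comparing $|\bar w|$ with $|u|$, giving $\beta^N/\sqrt{1+\gamma^2}$; you do use the correct condition $\beta^N>\sqrt{1+\gamma^2}$ in the final parameter choice, so nothing breaks.
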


\begin{proof}

Let a continuous positive function $\gamma$ defined on $\hat
\Lambda$, and the cone $C^u_{\hat z} := \{(v, w) \in
\mathbb R^2, |w| \le \gamma(\hat z) \cdot |v|\}, z = (x, y) \in \Lambda, \hat z \in \hat \Lambda$. The
dimension of the core real linear space of this cone is 1.
Our cone field will be then $\mathcal{C}^u = \{C^u_{\hat z}\}_{\hat z \in \hat \Lambda}$ and we do not know a priori whether it is $Df$-invariant.
We have $Df_z(v,w) = \left(\begin{array}{c}
                            g'(x)\cdot v \\
                            \partial_x h(z) \cdot v + \partial_y h(z) \cdot w
                           \end{array}\right)$.
So in order to have an $f$-expanding field of cones, it is enough
to take $$ |g'(x)|^2 > 1+\gamma^2(\hat z), z \in \Lambda $$

If we assume $|g'(x)| > \beta > 1, x \in X$, then it would be enough to have

\begin{equation}\label{gamma}
0 < \gamma(\hat z) \le \sqrt{\beta^2 - 1}, \ \text{or}, \ 0 < \gamma(\hat z) \le \sqrt{\beta^{2N} - 1},
\end{equation}
where the second inequality is needed if we work with $f^N$
instead of $f$. So in this last case, $f^N$ is expanding on the
cone field $\mathcal{C}^u$.

Now we estimate the co-expansion coefficient.
If $N\ge 1$ is an integer and if $(v, w) \notin C^u_{\hat f^N \hat z}$, then $|w| > \gamma(\hat f^N \hat z)\cdot |v|$.
 Denote also $f^N z = (g^N(x), h(f^{N-1}x, h_{N-1}(x, y)))$, where
$h_{N-1}$ is given by: $f^{N-1} z = (g^{N-1} (x), h_{N-1}(x, y)), z = (x, y) \in \Lambda$. So
\begin{equation}\label{der}
\aligned
&\partial_x h_N (x, y) =  \partial_x h (f^{N-1}x,
h_{N-1}(x, y)) \cdot \partial_x g^{N-1}(x) +
\partial_y h (f^{N-1} x, h_{N-1}(x, y)) \cdot \partial_x h_{N-1}(x, y) = \\
& =\partial_x h (f^{N-1}x, h_{N-1}(x, y)) \cdot \partial_x
g^{N-1}(x) +
\partial_y h (f^{N-1} x, h_{N-1}(x, y)) \cdot \partial_x h (f^{N-2}x, h_{N-2}(x, y)) \cdot \partial_x g^{N-2}(x)
+\\
&\partial_y h (f^{N-1} x, h_{N-1}(x, y)) \cdot \partial_y
h(f^{N-2} x, h_{N-2}(x, y)) \cdot \partial_x h_{N-2}(x, y)
\endaligned
\end{equation}

Denote by $K:= \mathop{\sup}\limits_\Lambda |\partial_x h|$ and
$K':= K \cdot \frac{1}{1-\delta/\beta}$, where $\delta \in (0, 1)$
is a contraction factor, such that $|\partial_y h | < \delta < 1$ on $\Lambda$.
Therefore by induction in (\ref{der}) we have:

\begin{equation}\label{in}
|\partial_x h_N(x, y)| \le K \cdot |(g^{N-1})'x| + \delta K \cdot
|(g^{N-2})'x| + \ldots \le K' \cdot |(g^{N-1})' x|
\end{equation}

But $D(f^N)^{-1}_{f^N z} \left( \begin{array}{c}
                                  v \\

                                  w
                                \end{array} \right)  = \left( \begin{array}{c}
               \frac{v}{(g^N)' (x)}  \\
              \frac{-\partial_x h_N(z) v}{(g^N)'(x) \cdot \partial_y h_N(z)} + \frac{w}{\partial_y h_N(z)}
           \end{array} \right)$.

Hence $||D(f^N)^{-1}_{f^N z} \left(\begin{array}{c}
                                    v \\

                                    w
                                  \end{array}\right) ||^2
\ge \frac{v^2}{|(g^N)'(x)|^2} \left(1 + \frac{|\partial_x
h_N|^2(z)}{|\partial_y h_N|^2(z)} \right) + \frac{w^2}{|\partial_y
h_N|^2(z)} \cdot \left( 1 - \frac{2|\partial_x h_N(z)|}{|(g^N)'(x)
\cdot \gamma(\hat f^N \hat z)|} \right )$, for any $N \ge 1$. But
then since $|\partial_x h_N(z)| \le K' \cdot |(g^{N-1})'(x)|$, and
$K'$ depends only on $g, h$, there must exist $N$ sufficiently
large such that $\left| \frac{2K'\cdot
\frac{1}{g'(g^{N-1}x)}}{\gamma(\hat f^N(\hat z))} \right| \le
\frac{2K'}{\beta \cdot\sqrt{\beta^{2N}-1}} < \frac 1 2$, if we
take the map $\gamma(\cdot)$ to be constant on $\hat \Lambda$ and
close to $\sqrt{\beta^{2N}-1}$ (although smaller than
$\sqrt{\beta^{2N}-1}$). So:
\begin{equation}\label{inverse}
||D(f^N)^{-1}_{f^N z} \left(\begin{array}{c}
                                    v \\

                                    w
                                  \end{array}\right) ||^2
\ge \frac{|w|^2}{2 |\partial_y h_N(z)|^2}
\end{equation}

But we had $|w| > \gamma(\hat f^N \hat z) \cdot |v|$, hence:

$$ \frac{|w|^2}{2|\partial_y h_N(z)|^2} \ge \frac{|w|^2}{2
\delta^{2N}} > \frac{|v|^2 + |w|^2}{\delta}, $$

for $N$ sufficiently large, since $\gamma(\cdot)$ is bounded on
$\hat \Lambda$. Hence $\mathop{\inf}\limits_{\hat z \in \hat
\Lambda} m_{\mathcal{C}^u, \hat z}'(f^N) > 1$ for some large
integer $N$. So $f^N$ is both expanding and co-expanding on the
cone field $\mathcal{C}^u$ over $\Lambda$, so the map $f$ is
hyperbolic according to the previous result. The unstable space
corresponding to the arbitrary prehistory $\hat z \in \hat
\Lambda$ is obtained then as $E^u_{\hat z} =
\mathop{\cap}\limits_{n \ge 0} Df^n(C^u_{\hat f^{-n} \hat z})$.

\end{proof}

We will show now that there correspond different unstable
tangent directions $E^u_{\hat z} \ne E^u_{\hat z'}$, to two different prehistories of $z$, $\hat z, \hat z' \in \hat \Lambda$.

\begin{thm}\label{unstable}
Let the skew product $f: J_* \times I \to J_* \times I$ defined
above in (\ref{skew}). Then if $\hat z$ and $\hat z'$ are two
different prehistories of $z$ from $\hat \Lambda$, it follows that
$E^u_{\hat z} \ne E^u_{\hat z'}$; in particular the local unstable
manifolds $W^u_r(\hat z), W^u_r(\hat z')$ are not tangent to each other.
\end{thm}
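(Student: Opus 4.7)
The plan is to parametrize each unstable direction $E^u_{\hat z}$ as the graph of a real slope $s(\hat z)$ over the horizontal axis and to derive a convergent series for $s(\hat z)$ in terms of the derivatives of $f$ along the backward orbit. Since the stable bundle $E^s_z$ equals the vertical line $\{0\}\times\mathbb R$ and $E^u_{\hat z}\subset C^u_{\hat z}$ has bounded aperture, $E^u_{\hat z}$ is non-vertical and equals $\mathbb R\cdot(1,s(\hat z))$ for a unique $s(\hat z)\in \mathbb R$. The invariance $Df_{z_{-1}}(E^u_{\hat f^{-1}\hat z}) = E^u_{\hat z}$, combined with the explicit form of $Df$ from the proof of Proposition \ref{hyperbolic}, gives the recursion
\[
s(\hat z) = \frac{\partial_x h(z_{-1}) + \partial_y h(z_{-1})\, s(\hat f^{-1}\hat z)}{g'(x_{-1})},
\]
and iterating, using $|\partial_y h|\equiv 1/2$ and $g'>\beta$, yields the absolutely convergent series
\[
s(\hat z) = \sum_{j\ge 1} \frac{\partial_x h(z_{-j})\prod_{i=1}^{j-1}\partial_y h(z_{-i})}{\prod_{i=1}^{j} g'(x_{-i})},
\]
with $|s(\hat z)|\le 2/(2\beta-1)$.

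I then reduce to the case where $\hat z$ and $\hat z'$ already differ at step one. If they first differ at step $k\ge 1$, let $\hat w = \hat f^{-(k-1)}\hat z$ and $\hat w' = \hat f^{-(k-1)}\hat z'$; these are two prehistories of the common point $w = z_{-(k-1)}$ that split at step one. Because $\det Df = g'\cdot \partial_y h\ne 0$ at every point of $\Lambda$, the invertible linear map $Df^{k-1}_w:T_wM\to T_zM$ carries $E^u_{\hat w}$ and $E^u_{\hat w'}$ onto $E^u_{\hat z}$ and $E^u_{\hat z'}$ respectively, so it suffices to show $E^u_{\hat w}\ne E^u_{\hat w'}$. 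Each point of $\Lambda$ has at most two preimages in $\Lambda$, distinguished by whether the $x$-coordinate lies in $I_1^\alpha$ or $I_2^\alpha$, so without loss of generality $\hat z$ selects the $I_1^\alpha$-preimage and $\hat z'$ the $I_2^\alpha$-one.

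I then split on the location of $x$. If $x\in I_1^\alpha$ then $z_{-1}\in I_{11}^\alpha$ (with $\partial_x h\equiv +1$) and $z'_{-1}\in I_{21}^\alpha$ (with $\partial_x h\equiv -1$), so the $j=1$ terms of the two series contribute $1/g'(x_{-1}) + 1/g'(x'_{-1})\ge 2/\sup_J g'$ to $s(\hat z) - s(\hat z')$, while all higher-order differences are bounded by $O(\beta^{-2})$ via the uniform bound on $|s|$; for $\alpha$ small enough the leading contribution wins. The delicate case is $x\in I_2^\alpha$: then $z_{-1}\in I_{12}^\alpha$, $z'_{-1}\in I_{22}^\alpha$, both have $\partial_x h\equiv 0$, so the $j=1$ terms vanish. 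However, since $x_{-1}\in I_{12}^\alpha\subset I_1$ its $g$-preimages lie in $I_{11}^\alpha\cup I_{21}^\alpha$, forcing $\partial_x h(z_{-2})=\pm 1$ and hence making the $j=2$ term of $s(\hat z)$ have absolute value at least $1/(2(\sup_J g')^2)$; on the other hand $x'_{-1}\in I_{22}^\alpha\subset I_2$ forces $x'_{-2}\in I_{12}^\alpha\cup I_{22}^\alpha$ and $\partial_x h(z'_{-2})=0$, so the $j=2$ term of $s(\hat z')$ also vanishes. The $j\ge 3$ tails of both series are of order $\beta^{-3}$, so for $\alpha$ small enough the $j=2$ contribution to $s(\hat z)$ dominates and $s(\hat z)\ne s(\hat z')$.

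The main obstacle is the case $x\in I_2^\alpha$, where the $j=1$ terms cancel symmetrically and one must exploit the structural asymmetry between the branches $I_{12}^\alpha$ and $I_{22}^\alpha$ (namely that $g$-preimages of an $I_{12}^\alpha$-point have $\partial_x h = \pm 1$, while $g$-preimages of an $I_{22}^\alpha$-point still give $\partial_x h = 0$) to extract a non-zero difference at the next order. As a byproduct of this analysis one obtains a lower bound on $|s(\hat z) - s(\hat z')|$ of order $\beta^{-m}$, where $m$ is the first step at which the two prehistories branch over a point of $I_1^\alpha$; this provides the quantitative angle estimate between $E^u_{\hat z}$ and $E^u_{\hat z'}$ advertised in the introduction.
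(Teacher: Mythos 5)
Your setup (slope function $s(\hat z)$, the recursion $s(\hat z)=\bigl(\partial_x h(z_{-1})+\partial_y h(z_{-1})\,s(\hat f^{-1}\hat z)\bigr)/g'(x_{-1})$, the convergent series, and the comparison of the first differing term against the geometric tail) is exactly the paper's argument, and your reduction to the first branching time via the invertibility of $Df$ on $\Lambda$ is a clean substitute for the paper's ``similarly as above'' treatment of the general case. The case $x\in I_1$ is handled correctly, provided you make the constants explicit: both the leading difference and the tail are of order $\beta^{-2}$, and the win comes from $2/\sup g'\ge 2/\beta^2$ versus a tail bounded by roughly $1/\beta^2$ (because $|\partial_y h|=1/2$), not from taking $\alpha$ small.

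The genuine problem is your ``delicate case'' $x\in I_2$. There the domination claim fails with the stated bounds: the paper only assumes $\beta<g'<\beta^2$, so your lower bound for the $j=2$ term of $s(\hat z)$ is $1/\bigl(2(\sup_J g')^2\bigr)\ge 1/(2\beta^4)$, while the $j\ge 3$ tail of $s(\hat z')$ is only bounded above by order $\beta^{-3}$; since the two series have different denominators $g'(x_{-i})$ versus $g'(x'_{-i})$, you cannot factor out a common prefix to rescue the comparison (e.g.\ $g'(x_{-1})g'(x_{-2})\approx\beta^4$ while $g'(x'_{-i})\approx\beta$ and $\partial_x h(z'_{-3})=\pm1$ makes the $j=3$ term of $s(\hat z')$ of size $\approx 1/(4\beta^3)$, swamping your $j=2$ term). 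Fortunately the case is vacuous for the map (\ref{skew}): for $x\in I_2$ the two branch images are $h_{x_{-1,1}}(\Lambda_{x_{-1,1}})=1-\tfrac12\Lambda_{x_{-1,1}}\subset(\tfrac12,\,1-\tfrac\alpha2]$ (since fibers over $I_1$ lie in $[\alpha,c]$ with $c\le 2\sup I_1<1$) and $h_{x_{-1,2}}(\Lambda_{x_{-1,2}})=\tfrac12\Lambda_{x_{-1,2}}\subset[0,\tfrac12-\tfrac\alpha4]$, which are disjoint; hence a point of $\Lambda$ over $I_2$ has a single $f$-preimage in $\Lambda$, every branching of prehistories occurs over $I_1$, and the first case of your analysis already covers all situations (this is what the paper uses implicitly when it writes $\partial_x h(z_{-1})=1$, $\partial_x h(z'_{-1})=-1$, and it is consistent with Theorem \ref{Cantor-fibers}, which over $I_2$ only produces points with two $f^2$-preimages). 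So replace the $j=2$ analysis by this one-line disjointness observation; as written, that step of your proof does not go through.
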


\begin{proof}

Let the point $z = (x, y) \in \Lambda_x, x \in J_*$. For a tangent
vector $(v, w)$ we have $$Df_z \left( \begin{array}{c}
                                  v \\

                                  w
                                \end{array} \right)   =
\left(\begin{array}{c}
         g'(x)\cdot v \\
     \partial_x h(z) \cdot v + \partial_y h(z) \cdot w
       \end{array}\right)$$

Assume that the unstable tangent space corresponding to the prehistory $\hat z \in \hat \Lambda$ is given by
\begin{equation}\label{u-space}
E^u_{\hat z} := \{(v, \omega(\hat z) \cdot v), v \in \mathbb R\}, \hat z \in \hat \Lambda,
\end{equation}
where $\omega(\cdot)$ is a bounded function on $\hat \Lambda$, since the unstable spaces must be
transversal to the stable (vertical) ones.
Thus from the above formula, we obtain $$Df_z  \left( \begin{array}{c}
                                  v \\

                                  w
                                \end{array} \right)  =
\left(\begin{array}{c}
         g'(x)\cdot v \\
     \partial_x h(z) \cdot v + \partial_y h(z) \cdot \omega(\hat z) v
       \end{array}\right)$$

Now we know from the construction of unstable spaces on $\hat
\Lambda$ as intersections of iterates of unstable cones (Proposition \ref{hyperbolic}) that $Df_z(E^u_{\hat z}) \subset
E^u_{\hat f \hat z}$. Therefore $\partial_x h(z) +  \partial_y
h(z) \cdot \omega(\hat z)  = \omega(\hat f \hat z) \cdot g'(x),
\hat z \in \hat \Lambda$. So if $z_{-1}$ denotes an $f$-preimage
of $z$ belonging to $\Lambda$ and $x_{-1}$ denotes an $g$-preimage
of $x$ belonging to $J_*$, we obtain the following recurrence
formula for $\omega(\cdot)$:

\begin{equation}\label{omega}
\omega(\hat f \hat z) = \frac{1}{g'(x)} \partial_x h(z) +
\frac{1}{g'(x)}\partial_y h(z) \cdot \omega(\hat z)
\end{equation}

By iterating (\ref{omega}) and by recalling that $\omega$ is a
bounded function on $\hat \Lambda$ (since the stable and
unstable directions must be transversal to each other),  we
obtain:
\begin{equation}
\aligned
\omega(\hat f\hat z) & = \frac{1}{g'(x)}\partial_x h(z) +
\frac{1}{g'(x)}\cdot
\partial_y h(z) (\frac{1}{g'(x_{-1})}\cdot
\partial_xh(z_{-1}) + \frac{1}{g'(x_{-1})} \cdot
\partial_yh(z_{-1}) \cdot \omega(\hat z_{-1})) \\
& = \frac{1}{g'(x)} \partial_x h(z) +
\frac{1}{g'(x)g'(x_{-1})}\partial_y h(z) \partial_x h(z_{-1}) +
\frac{1}{g'(x)g'(x_{-1})} \partial_y h (z) \partial_y h(z_{-1})
\omega(\hat z_{-1})\\ & = \ldots = \frac{1}{g'(x)}
\partial_x h(z) + \mathop{\sum}\limits_{i=1}^\infty \frac{1}{g'(x)
\cdot \ldots \cdot g'(x_{-i})} \cdot \partial_x h(z_{-i})
\partial_y h(z_{-i+1}) \ldots \partial_y h(z)
\endaligned
\end{equation}

Now if $\hat z'= (z, z_{-1}', z_{-2}', \ldots)$ is another
prehistory of $z$ from $\hat \Lambda$, say with $z_{-1}' \ne
z_{-1}$, we have from above that $$ \omega(\hat z') =
\frac{1}{g'(x_{-1}')}\partial_x h(z_{-1}') +
\mathop{\sum}\limits_{i=2}^\infty \frac{1}{g'(x_{-1}') \ldots
g'(x_{-i}')} \cdot \partial_x h(z_{-i}')
\partial_y h(z_{-i+1}') \ldots \partial_y h(z_{-1}')$$

Therefore
\begin{equation}\label{l}
\aligned
&\omega(\hat z) - \omega(\hat z') = \frac{1}{g'(x_{-1})}
\partial_x h(z_{-1}) - \frac{1}{g'(x_{-1}')}
\partial_xh(z_{-1}') + \\
& + \left[ \mathop{\sum}\limits_{i=2}^\infty \frac{\partial_x
h(z_{-i})\cdot
\partial_y h(z_{-i+1}) \ldots \partial_yh(z_{-1})}{g'(x_{-1})
\ldots g'(x_{-i})}  - \mathop{\sum}\limits_{i=2}^\infty
\frac{\partial_xh(z_{-i}')\cdot
\partial_y h(z_{-i+1}') \ldots \partial_yh(z_{-1}')}{g'(x_{-1}') \ldots g'(x_{-i}')} \right]
\endaligned
\end{equation}

Let us assume that $$\beta^2 > g'(x) > \beta >>1, x \in J_*,$$ for
some large $\beta$ which depends on $\alpha$; this holds since the
map $g$ expands the small intervals $I_1, I_2$ to the whole $I =
[0, 1]$, and $g$ was assumed increasing.

We assume that $|\partial_yh| < \delta < 1$ on $\Lambda$.
Now the expression in the straight brackets in (\ref{l}), is less
than $\frac{1}{\beta^2}(1 + \frac{\delta}{\beta} + \ldots) =
\frac{1}{\beta^2}\cdot \frac{1}{1-\frac{\delta}{\beta}} <
\frac{1.2}{\beta^2}$, if $\beta$ is large enough.

But if $z_{-1}, z_{-1}'$ are different preimages of $z$, it
follows that we must have $z_{-1} \in \Lambda_{x_{-1}}$ and
$z_{-1}' \in \Lambda_{x_{-1}'}$ for two different $g$-preimages of
$x$, say $x_{-1} \in I_1$ and $x_{-1}' \in I_2$. Then we have
$\partial_xh(z_{-1}) = 1$ and $\partial_x h(z_{-1}')= -1$.
Therefore we have that $$|\frac{1}{g'(x_{-1})}
\partial_x h(z_{-1}) - \frac{1}{g'(x_{-1}')}
\partial_xh(z_{-1}')| > \frac{2}{\beta^2}$$

Hence from the above estimate of the expression in the straight
brackets of (\ref{l}), we see that for two prehistories $\hat z,
\hat z' \in \hat \Lambda$ of $z$ with  $z_{-1} \ne z_{-1}'$, we
obtain: $$ |\omega(\hat z) - \omega(\hat z')| >
\frac{0.7}{\beta^2}$$

In general, let $\hat z, \hat z'$ two different prehistories of
$z$ from $\hat \Lambda$. Then there exists $m \ge 1$ so that
$z_{-m} \ne z_{-m}'$ and $z_{-i} = z_{-i}'$ for $i = 0, \ldots,
m-1$ (where as always we denote $z = z_0$). Then similarly as
above we obtain that
\begin{equation}\label{omega-gen}
 |\omega(\hat z) - \omega(\hat z')| >
\frac{0.7}{\beta^{m+1}},
\end{equation}
where we recall that $\beta = \beta(\alpha)$. Clearly if
$E^u_{\hat z} \ne E^u_{\hat z'}$ for two different prehistories of
$z$ then also the corresponding local unstable manifolds
$W^u_r(\hat z), W^u_r(\hat z')$ are different, and they are not
tangent to each other.

\end{proof}

We estimated thus in the previous Theorem the angle between
unstable directions corresponding to different prehistories of the
same point, by using the dilation factor $\beta$ of $g: I_1 \cup
I_2 \to I$.

We will give now a generalization of example (\ref{skew}) to a
family which is nonlinear in $x$ and linear in $y$, with fiber
contraction factors possibly different from $\frac 12$.

First let us fix a small positive $\alpha$. Then take the
subintervals $I_1^\alpha, I_2^\alpha \subset I = [0 , 1]$ so that
$I_1^\alpha$ is contained in $[\frac 12 - \epsilon(\alpha), \frac
12 + \epsilon(\alpha)]$ and $I_2^\alpha$ is contained in
$[1-\alpha - \epsilon(\alpha), 1-\alpha + \epsilon(\alpha)]$, for
some small $\epsilon(\alpha) < \alpha^2$. We take then a strictly
increasing smooth map $g:I_1^\alpha\cup I_2^\alpha \to I$ with
$\beta^2
> g'(x)
> \beta >> 1, x \in I_1^\alpha \cup I_2^\alpha$,  and define the subintervals $I_{ij}^\alpha \subset I_i^\alpha$ such that
$g(I_{ij}^\alpha) = I_j^\alpha, i, j = 1,2$. As in (\ref{skew}),
we define the set $J_*^\alpha$ as the set of points from
$I_1^\alpha \cup I_2^\alpha$ whose $g$-iterates remain in the same
set.  Now consider the skew product $f_\alpha: J_*^\alpha \times I
\to J_*^\alpha \times I$,

\begin{equation}\label{skew-general}
f_\alpha(x, y) = (g(x), h_\alpha(x, y)), \text{with}
\end{equation}
$$h_\alpha(x, y)= \left\{ \begin{array}{ll}
                     \psi_{1, \alpha}(x) + s_{1, \alpha} y, \ x \in I_{11}^\alpha \\
                     \psi_{2, \alpha}(x) + s_{2, \alpha} y, \ x \in I_{21}^\alpha \\
                     \psi_{3, \alpha}(x) - s_{3, \alpha} y, \ x \in I_{12}^\alpha \\
                      s_{4, \alpha} y, \ x \in I_{22}^\alpha,
                     \end{array}
           \right.
$$

where for some small $\vp_0$, we take $s_{1, \alpha}, s_{2,
\alpha}, s_{3, \alpha}, s_{4, \alpha}$ to be positive numbers,
$\vp_0$-close to $\frac{1}{2}, \frac 12, \frac 12, \frac 12$
respectively; and $\psi_{1, \alpha}(\cdot), \psi_{2,
\alpha}(\cdot), \psi_{3, \alpha}(\cdot)$ are smooth (say
$\mathcal{C}^2$) functions on $I$ which are $\vp_0$-close in the
$\mathcal{C}^1$-metric, to the linear functions $x \to x$, $x \to
1-x$ and $x \to 1$, respectively.

By $|g_1 - g_2|_{\mathcal{C}^1}$ we shall denote the distance in
the $\mathcal{C}^1(I)$-metric between two smooth functions on I,
$g_1$ and $g_2$.

We shall denote also the function $h_\alpha(x, \cdot): I \to I$ by
$h_{x, \alpha}(\cdot)$, for $x \in J_*^\alpha$. Again when
$\alpha$ is unambiguous and fixed, we will not record the
dependence on $\alpha$ (but will keep it in mind).

\begin{thm}\label{general}
There exists a function $\vartheta(\alpha)>0$ defined for all
positive small enough numbers $\alpha$, with $\vartheta(\alpha)
\mathop{\to}\limits_{\alpha \to 0} 0$ and such that, if $f$ is an
arbitrary map defined in (\ref{skew-general}) whose parameters
satisfy:
\begin{equation}\label{vart}
\max \left\{|\psi_1(x)-x|_{\mathcal{C}^1}, |\psi_2(x) -1
+x|_{\mathcal{C}^1}, |\psi_3(x)- 1|_{\mathcal{C}^1}, |s_1-\frac
12|, |s_2-\frac 12|, |s_3 - \frac 12|, |s_4 - \frac 12|\right\} <
\vartheta(\alpha)
\end{equation}
 then we obtain:

a) For $x \in J_* \cap I_1$, there exists a Cantor set $F_x
\subset \Lambda_x$, s. t every point of $F_x$ has two different $f$-preimages in
$\Lambda$. And if $x \in J_* \cap I_2$, then there exists a Cantor set $F_x \subset \Lambda_x$ s. t every point
of $F_x$ has two different $f^2$-preimages in $\Lambda$.

b) $f$ is hyperbolic on $\Lambda$.

c) If $\hat z, \hat z' \in \hat \Lambda$ are two different
prehistories of an arbitrary point $z \in \Lambda$, then
$E^u_{\hat z} \ne E^u_{\hat z'}$. We have the same estimate for
the angle between $E^u_{\hat z}$ and $E^u_{\hat z'}$ as in
(\ref{omega-gen}).

\end{thm}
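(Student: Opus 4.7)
The plan is to treat Theorem \ref{general} as a $\mathcal{C}^1$-perturbation result: each of the three properties established for the model map (\ref{skew}) relied on finitely many quantitative $\mathcal{C}^1$-estimates on $g$ and $h_\alpha$, and each of these estimates remains valid if the four branches of $h_\alpha$ are replaced by functions $\vartheta(\alpha)$-close in $\mathcal{C}^1$ to their model counterparts. I would define $\vartheta(\alpha)$ as the minimum of the three thresholds coming from parts (a), (b), (c) separately.

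For (a), I would repeat the inductive construction of $\Lambda_x(n)$ from the proof of Theorem \ref{Cantor-fibers}. Because each perturbed branch $\psi_{i,\alpha}(x) \pm s_{i,\alpha} y$ is still a contraction in $y$ with ratio close to $1/2$, the error introduced at each step in the positions of the endpoints $a_k(x,n), c_k(x,n)$ is dominated by a convergent geometric series in $\vartheta(\alpha)$, so the qualitative picture (left endpoint of $J_1(x,n)$ near $\alpha$ or $0$, right endpoint of $J_{k(n)}(x,n)$ near $1-\alpha/2$, and the overlapping inside $\Lambda_x(n)$ for $x \in J_* \cap I_1$) persists. The key inequality $\ell(J_k(x,n))/\ell(U) \geq \Delta(\alpha)$ only degrades by a factor $1 + O(\vartheta(\alpha))$, so $\tau(S_1(x)), \tau(S_2(x)) \to \infty$ as $\alpha \to 0$. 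Interleaving of $S_1(x)$ and $S_2(x)$ is an open condition on endpoints and also survives, so the hypotheses of the gap lemma of \cite{K} are again satisfied for $\vartheta(\alpha)$ small. The $I_2$ case reduces to the $I_1$ case by one additional $g$-pullback, exactly as in the original proof.

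For (b), the cone-field argument of Proposition \ref{hyperbolic} uses only the bounds $\beta^2 > g'(x) > \beta$, $|\partial_y h_\alpha| < \delta < 1$, and a uniform bound on $|\partial_x h_\alpha|$; all these survive a $\vartheta(\alpha)$-perturbation of the parameters, so the same $N$, the same cone field $\mathcal{C}^u$, and the same computation (\ref{inverse}) show that $f^N$ is expanding and co-expanding on $\mathcal{C}^u$, hence $f$ is hyperbolic on $\Lambda$. For (c), I would apply the telescoping identity (\ref{l}) verbatim: the infinite tail is bounded by $\frac{1}{\beta^2(1-\delta/\beta)} < \frac{1.2}{\beta^2}$ as before, and the leading term satisfies
\[
\left| \frac{\partial_x h_\alpha(z_{-1})}{g'(x_{-1})} - \frac{\partial_x h_\alpha(z_{-1}')}{g'(x_{-1}')} \right| \;\geq\; \frac{2 - O(\vartheta(\alpha))}{\beta^2},
\]
since on branches indexed by $I_1$ and $I_2$ the derivative $\partial_x h_\alpha$ is $\vartheta(\alpha)$-close to $+1$ and $-1$ respectively. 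Choosing $\vartheta(\alpha)$ small the lower bound $|\omega(\hat z) - \omega(\hat z')| > 0.7/\beta^{m+1}$ from (\ref{omega-gen}) is preserved, whence $E^u_{\hat z} \ne E^u_{\hat z'}$.

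The main obstacle is in (a): one must propagate the perturbation error through the inductive endpoint computation uniformly in $x$ and in $n$, so that the qualitative overlap pattern — and in particular the interleaving of $S_1(x)$ and $S_2(x)$ — is preserved for every fiber and every level. This forces $\vartheta(\alpha)$ to be taken small compared with $\epsilon(\alpha)$, and hence small compared with $\alpha^2$, which is exactly why $\vartheta(\alpha) \to 0$ as $\alpha \to 0$. Parts (b) and (c), by contrast, require only a single quantitative smallness depending on $\beta(\alpha)$ and $\delta$ and go through with essentially the same constants.
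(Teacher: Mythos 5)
Your proposal is correct and follows essentially the same route as the paper: all three parts are obtained by rerunning the arguments of Theorem \ref{Cantor-fibers}, Proposition \ref{hyperbolic} and Theorem \ref{unstable} under the quantitative closeness (\ref{vart}), with the thickness bound $\Delta(\alpha)$, Kraft's intersection theorem, the cone-field criterion, and the telescoping series for $\omega$ all surviving the perturbation. The only cosmetic difference is that the paper records the persistence of the overlap and of the $\alpha$-sized gap through explicit inequalities on $\psi_j$ and $s_i$ (its conditions (\ref{C1})--(\ref{C2})), whereas you argue by propagating the $O(\vartheta(\alpha))$ endpoint errors as a geometric series against the model map; both yield the same choice of $\vartheta(\alpha)\to 0$.
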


\begin{proof}

The proof uses basically the same ideas as in Theorem \ref{Cantor-fibers},
Proposition \ref{hyperbolic} and Theorem \ref{unstable}, with certain modifications.

a) \ $I_1, I_2$ are
$\epsilon(\alpha)$-close to $\frac 12, 1-\alpha$. We assumed in
the definition (\ref{skew-general}), that
$\epsilon(\alpha) < \alpha^2 << 1$. Like in the proof of Theorem
\ref{Cantor-fibers}, the set $\Lambda_x(n)$ is formed by $k(n)$
disjoint subintervals $J_1(x, n), \ldots, J_{k(n)}(x, n)$ arranged
in an increasing order; assume also that for $x \in J_* \cap I_1$,
$J_k(x, n) = [a_k(x, n), c_k(x, n)], k = 1, \ldots, k(n)$ and for
$x\in J_* \cap I_2$, $J_k(x, n) = [\tilde a_k(x, n), \tilde c_k(x,
n)], k = 1, \ldots, k(n)$.
Also for a point $x \in J_*$, there exist two $g$-preimages of $x$ in $J_*$, which will be denoted by $x_{-1, 1}$ (for the $g$-preimage of $x$ from $I_1$), and $x_{-1, 2}$ (for the $g$-preimage of $x$ belonging to $I_2$).

Then by induction we see that $\tilde a_1(x, n) = 0$ for all $n >
1$ and $x \in J_* \cap I_2$. This implies that for all $x \in J_*
\cap I_1, n >1$, we have $a_1(x, n) = h_{x_{-1, 2}}(\tilde
a_1(x_{-1, 2}, n-1)) = h_{x_{-1, 2}}(0)$. Therefore we obtain that $a_1(x,
n)$ is $\alpha^2$-close to $\psi_2(1-\alpha)$ if $x \in J_* \cap
I_1$. This implies also that the right most endpoint of $\Lambda_{y}(n), y \in I_2 \cap J_*$, namely $\tilde c_{k(n)}(y, n)$, is $\alpha^2$-close to $\psi_3(\frac 12)- s_3 \psi_2(1-\alpha)$.

We see now that the right endpoint of $J_{k(n)}(x, n), x \in I_1
\cap J_*$, namely $c_{k(n)}(x, n)$ is $\alpha^2$-close to
$\psi_1(\frac 12) + s_1 c_{k(n-1)}(x_{-1, 1}, n-1)$. Thus we see
that $c_{k(n)}(x, n) \nearrow c(x, \infty)$ when $n \to \infty$
and that $c(x, \infty)$ is $\alpha^2$-close to $\psi_1(\frac 12) (1 +s_1 + s_1^2+\ldots)= \frac{\psi_1(\frac
12)}{(1-s_1)}$. It follows as in Theorem \ref{Cantor-fibers} that
we do have points of $\Lambda_x$ as close as we want to
$c_{k(n)}(x, n)$ for $n>1$; hence there are points of $\Lambda_x$
as close as we want to $c(x, \infty)$ when $x \in J_* \cap I_1$.

Now if $a_1(x, n)$ is $\epsilon(\alpha)$-close to
$\psi_2(1-\alpha)$ for $x \in I_1 \cap J_*$, it follows from
construction that $\tilde c_{k(n)}(x', n)$ is
$\epsilon(\alpha)$-close to $\psi_3(\frac 12)-
s_3\psi_2(1-\alpha)$ if $x' \in I_2 \cap J_*$. Recall also that
$\epsilon(\alpha) < \alpha^2$. Again, since there are points of
$\Lambda_{x}$ as close as we want to $a_1(x, n)$ when $x \in
I_1\cap J_*$, we see that $a_1(x, n)=h_{x_{-1, 2}}(0) \in
\Lambda_x$;  thus there exist points of $\Lambda_{x'}$ as close as
we want to $\tilde c_{k(n)}(x, n) = \psi_3(\frac 12)- s_3 \cdot
a_1(x'_{-1, 1}, n)$ for $x' \in I_2\cap J_*$.

Now for an iteration of order $n >3$, we will want to have the two
phenomena which gave the fractal structure of $\Lambda_x$ in
Theorem \ref{Cantor-fibers}. The first desired phenomenon is the
overlapping in $\Lambda_x, x \in I_1\cap J_*$, of the first
intervals of $h_{x_{-1, 1}}(\Lambda_{x_{-1, 1}})$ (i.e
subintervals $J_k(x, n), k < m(n)$ for some $m(n)$), with the last
intervals of $h_{x_{-1, 2}}(\Lambda_{x_{-1, 2}})$ (i.e with the
subintervals of type $J_k(x, n), k > p(n)$). And the second
desired phenomenon is the appearance of a gap of length comparable
to $\alpha$, in $\Lambda_x, x \in I_2 \cap J_*$, between
$h_{x_{-1, 2}}(\tilde c_{k(n-1)}(x_{-1, 2}, n-1))$ and $h_{x_{-1,
1}}(c(x_{-1, 1}, \infty))$. We use $c(x_{-1, 1}, \infty)$ since
$c_{k(n)}(x_{-1, 1}, n) \mathop{\nearrow}\limits_{n \to \infty}
c(x_{-1, 1}, \infty)$ and there are points from $\Lambda_{x_{-1,
1}}$ as close as we want to $c(x_{-1, 1}, \infty)$; thus the gap
in $\Lambda_x$ is bounded above by $h_{x_{-1, 1}}(c(x_{-1, 1},
\infty))$.

Therefore our two conditions are satisfied if:
\begin{equation}\label{C1}
\psi_1(\frac 12) + s_1\cdot \psi_2(1-\alpha) < \psi_2(1-\alpha) +
s_2 \cdot [\psi_3(\frac 12)-s_3 \psi_2(1-\alpha)], \ \text{and}
\end{equation}
\begin{equation}\label{C2}
\frac{\alpha}{2}< \psi_3(\frac 12)- s_3\cdot \frac{\psi_1(\frac
12)}{1-s_1}- s_4\cdot [\psi_3(\frac 12)- s_3 \psi_2(1-\alpha)] < \alpha
\end{equation}

As we can see these two conditions are verified if there exists a
sufficiently small $\vartheta(\alpha)>0$, s. t
\begin{equation}\label{var}
|s_i - \frac 12| < \vartheta(\alpha), i = 1, \ldots, 4 \
\text{and} \ \max \left\{ |\psi_1(x)- x|_{\mathcal{C}^1},
|\psi_2(x)- 1+ x|_{\mathcal{C}^1}, |\psi_3(x)-
1|_{\mathcal{C}^1}\right\} < \vartheta(\alpha)
\end{equation}

It is clear that $\vartheta(\alpha) \mathop{\to}\limits_{\alpha
\to 0} 0$. If $\vartheta(\alpha)$ is small enough, then the
thickness of the fibers $\Lambda_x$ remains larger than
$\Delta(\alpha)$. This permits for every $x \in I_1 \cap J_*$, to
have an intersection between the images $h_{x_{-1, 1}}(\Lambda_{x_{-1, 1}})$ and
$h_{x_{-1, 2}}(\Lambda_{x_{-1, 2}})$ inside $\Lambda_x$. Thus
we obtain a Cantor set $F_x \subset \Lambda_x$ s.t every point of
$F_x$ has two different $f$-preimages inside $\Lambda$. We obtain again that $$\tau(F_x) \ge \sqrt{\Delta(\alpha)}, x \in I_1 \cap J_*,$$
where $\Delta(\alpha) = O(\frac{1}{\alpha}), \alpha >0$.

b) \ The hyperbolicity follows in the same way as in Proposition
\ref{hyperbolic}, if the inequalities in (\ref{var}) are satisfied
and $\vartheta(\alpha)$ is small enough.

c) The disjointness of unstable directions corresponding to
different prehistories of the same point follows as in the proof
of Theorem \ref{unstable}, since the derivatives of $\psi_1,
\psi_2$ are $\vartheta(\alpha)$-close to $1$, respectively $-1$.
And similarly we obtain the same estimates (\ref{omega-gen}) for
the angle between two unstable directions $E^u_{\hat z}$ and
$E^u_{\hat z'}$, corresponding to two different prehistories of
the same point $z \in \Lambda$;  we recall also that $\beta$
depends on $\alpha$, in (\ref{omega-gen}).

\end{proof}

Let us study now the unstable, respectively stable dimension on
$\Lambda$; from this study we shall obtain \textbf{further
information} about the preimage counting function on $\Lambda$. We
will see more arguments towards the idea that the skew products
defined in (\ref{skew-general}) are \textbf{far} from being
homeomorphisms on their respective basic set $\Lambda$, and also
\textbf{far} from being 2-to-1 on $\Lambda$.

First we have the following result about the unstable dimension:

\begin{thm}\label{u-dim}
For a small fixed $\alpha>0$, let the function $f:\Lambda \to
\Lambda$ defined in (\ref{skew-general}). Then the unstable
dimension $\delta^u(\hat z) = t^u, \forall \hat z \in \hat
\Lambda$, where $t^u$ is the unique zero of the pressure function
$t \to P_{\hat f|_{\hat \Lambda}}(t\Phi^u)$, and where
$\Phi^u(\hat y) := -\log|Df_u(\hat y)|, \hat y \in \hat \Lambda$.
Consequently if $g'(x) > \beta(\alpha) >>1$ on $J_*$, we have
$$\delta^u(\hat z) < \frac{\log 2}{\log\frac{\beta(\alpha)}{2}},
\hat z \in \hat \Lambda$$
\end{thm}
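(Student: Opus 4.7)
The plan is to deduce the first assertion directly from the Bowen-type formula for the unstable dimension of hyperbolic endomorphisms established in \cite{M}, and then to obtain the explicit upper bound via the variational principle together with an entropy estimate and a lower bound on the unstable expansion $|Df_u|$.

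To apply the Bowen equation from \cite{M} I would verify its two hypotheses: (i) $f$ is uniformly hyperbolic on $\Lambda$, which was proved in Proposition \ref{hyperbolic} and generalized in Theorem \ref{general}; and (ii) $Df|_{E^u_{\hat y}}$ is conformal, which is automatic since $\dim E^u_{\hat y}=1$. Once these are in place, \cite{M} gives $\delta^u(\hat z)=t^u$ for every $\hat z\in\hat\Lambda$, where $t^u$ is a zero of the pressure function $t\mapsto P_{\hat f|_{\hat\Lambda}}(t\Phi^u)$. Uniqueness of $t^u$ is immediate: the pressure is continuous and convex, satisfies $P(0)=h_{\text{top}}(\hat f)>0$, and tends to $-\infty$ as $t\to\infty$ (since $\Phi^u<0$ uniformly, by hyperbolicity).

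For the explicit upper bound I would take an equilibrium measure $\mu^*$ for the potential $t^u\Phi^u$ on $\hat\Lambda$, so that
$$
0=h_{\mu^*}(\hat f)+t^u\int\Phi^u\,d\mu^*,\qquad\text{i.e.,}\qquad t^u=\frac{h_{\mu^*}(\hat f)}{\int\log|Df_u|\,d\mu^*},
$$
and bound numerator and denominator separately. For the numerator, since $\hat f$ is the natural extension of $f$ we have $h_{\mu^*}(\hat f)\le h_{\text{top}}(\hat f)=h_{\text{top}}(f)$; and because $f$ is a skew product over the full two-to-one Markov expanding map $g:J_*^\alpha\to J_*^\alpha$ with uniformly contracting fibers, classical results give $h_{\text{top}}(f)=h_{\text{top}}(g)=\log 2$. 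For the denominator, parametrizing $E^u_{\hat y}=\{(v,\omega(\hat y)v):v\in\mathbb R\}$ as in (\ref{u-space}), a direct computation yields
$$
|Df_u(\hat y)|^2=\frac{g'(x)^2+(\partial_x h(y)+\partial_y h(y)\omega(\hat y))^2}{1+\omega(\hat y)^2}\ge\frac{g'(x)^2}{1+\omega(\hat y)^2}.
$$
Iterating the recurrence (\ref{omega}) and summing the resulting geometric series (exactly as in the bracket estimate of (\ref{l})) yields $|\omega(\hat y)|=O(1/\beta(\alpha))$ uniformly in $\hat y$, hence $\sqrt{1+\omega(\hat y)^2}\le 2$ for $\beta(\alpha)$ large, and therefore $|Df_u(\hat y)|\ge\beta(\alpha)/2$. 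Combining these two bounds gives $t^u\le\log 2/\log(\beta(\alpha)/2)$ as claimed.

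The main obstacle is not conceptual but rather bookkeeping: rigorously asserting $h_{\text{top}}(\hat f)=\log 2$ requires the two standard facts that contractions in fibers add no topological entropy and that passing to the natural extension preserves entropy, while the uniform bound $|\omega(\hat y)|=O(1/\beta(\alpha))$ is a direct adaptation of the argument already used in the proof of Theorem \ref{unstable}. With these in hand the variational principle produces the stated estimate.
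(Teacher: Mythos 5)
Your proposal is correct and follows essentially the same route as the paper. Both arguments cite the Bowen-type formula of \cite{M} (valid because $\dim E^u=1$, so $Df_u$ is conformal), both use the bound $|\omega|=O(1/\beta)$ coming from iterating the recurrence (\ref{omega}) to get $|Df_u|\ge \beta(\alpha)/2$, and both use $h_{\mathrm{top}}(\hat f|_{\hat\Lambda})=h_{\mathrm{top}}(f|_\Lambda)=\log 2$. The only cosmetic difference is in the final step: you pass through an equilibrium measure $\mu^*$ for $t^u\Phi^u$ and bound $t^u=h_{\mu^*}/\int\log|Df_u|\,d\mu^*$ term by term, while the paper applies the pressure inequality $P(\phi)\le h_{\mathrm{top}}+\sup\phi$ directly to $\phi=t^u\Phi^u$, obtaining $0\le\log 2 - t^u\log(\beta/2)$ without invoking existence of an equilibrium state. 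These are equivalent in the present expansive hyperbolic setting; the paper's version is marginally lighter since it needs no existence theorem for equilibria, but your version is equally valid.
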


\begin{proof}
The first part of the Theorem follows from \cite{M} since the unstable manifolds in our case are 1-dimensional
(hence conformal).
So $\delta^u(\hat z) = t^u$, for all $\hat z \in \hat \Lambda$. We notice that $t^u$ is the zero of a pressure function
calculated on the
natural extension $\hat \Lambda$.

Now, from the proof of Theorem \ref{general} we know that
$\omega(\hat z) < \frac{1}{\beta(\alpha)}, \hat z \in \hat
\Lambda$. Hence $|Df_u(\hat z)| > \frac{\beta(\alpha)}{2}, \hat z
\in \hat \Lambda$ so $\Phi^u(\hat z) < -\log
\frac{\beta(\alpha)}{2}, \hat z \in \hat \Lambda$.

Also it is easy to see that $h_{top}(f|_\Lambda) = \log 2$ since
the Bowen balls of $f$ are given mainly by the expansion of $g$ in
the horizontal direction, and $g|_{J_*}$ is conjugated to
$\sigma_2$ on the one-sided Bernoulli shift $\Sigma_2^+$.
Therefore $0 = P_{\hat f|_{\hat \Lambda}}(t^u\Phi^u) < -t^u\log
\frac{\beta(\alpha)}{2} + \log 2$ and we obtain
$
\delta^u(\hat z) = t^u < \frac{\log
2}{\log\frac{\beta(\alpha)}{2}} $.
\end{proof}

Now we want to estimate the stable dimension over $\Lambda$; by contrast to Theorem \ref{u-dim} or to the diffeomophism case, we do not know here that $\delta^s(z)$ is constant when $z$ ranges in $\Lambda$.

Recall that we denoted by $d(\cdot)$ the \textit{preimage counting
function} for $f$ on $\Lambda$, defined by $d(z):= \text{Card}
\{z' \in \Lambda, f(z') = z\}, z \in \Lambda$. One major
difficulty is that $d(\cdot)$ \textbf{is not necessarily
continuous} on $\Lambda$, and not necessarily constant. So the
expression $P(t \Phi^s - \log d)$ does not make sense, since
pressure was defined for continuous maps. We will overcome this
obstacle in 2 different ways. The first one will be by using a
notion of \textbf{inverse pressure} (see \cite{MU2}). For a continuous non-invertible map $f:X \to X$ on a compact metric space $X$, the inverse
pressure $P_f^-(\cdot)$ is a functional defined by using consecutive
preimages of points (rather than forward iterates like for
usual pressure). It is useful in the case of estimating the stable
dimension of endomorphisms that are not necessarily constant-to-1
(\cite{M-Cam}, \cite{MU2}). For the negative function $\Phi^s$, we
proved that there exists a unique zero $t_s^-$ of the function $t
\to P_f^-(t \Phi^s)$, and that in our case $$\delta^s(z) \le t^-_s, z
\in \Lambda$$

The second way is by using \textbf{continuous upper bounds}
$\eta(\cdot)$ for the preimage counting function $d(\cdot)$, and
then to employ the unique zero $t_\eta$ of the function $t \to
P(t\Phi^s - \log \eta)$.

\begin{thm}\label{s-dim}
Let a sufficiently small $\alpha>0$ and a function $f$ defined as in
(\ref{skew-general}) for the chosen $\alpha$,
 and assume that the parameters of $f$ satisfy condition (\ref{vart}).

a) Then the stable dimension $\delta^s(z) \le t^-_s < 1$, for any point $z \in \Lambda$.

b) If $\eta(\cdot)$ is a continuous function on $\Lambda$ such
that $d(z) \le \eta(z), z \in \Lambda$, it follows that
$\delta^s(z) \ge t_\eta, z \in \Lambda$, where $t_\eta$ is the
unique zero of the function $t \to P(t\Phi^s - \log \eta)$.
\end{thm}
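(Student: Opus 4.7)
For part (a), I would invoke the inverse-pressure upper bound for the stable dimension of non-invertible hyperbolic maps with conformal stable direction from \cite{M-Cam}: since $E^s$ is one-dimensional, that result gives $\delta^s(z) \le t_s^-$ uniformly for $z \in \Lambda$, with $t_s^-$ the unique zero of $t \mapsto P_f^-(t\Phi^s)$. The existence and uniqueness of $t_s^-$ are immediate: the map is continuous, strictly decreasing because $\Phi^s = \log|Df_s|$ is uniformly negative on $\Lambda$ (since $|\partial_y h_\alpha| \le \tfrac 12 + \vartheta(\alpha) < 1$ by (\ref{vart})), non-negative at $t=0$, and tending to $-\infty$. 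The remaining work is to verify the strict inequality $P_f^-(\Phi^s) < 0$. Using the preimage-sum characterization of $P_f^-$ with the naive estimates $d(\cdot)\le 2$ and $|Df_s^n| \le (\tfrac 12 + \vartheta(\alpha))^n$ yields only the loose bound $P_f^-(\Phi^s) \le \log(1 + 2\vartheta(\alpha))$; strict negativity must be extracted from the geometric content of Theorem \ref{Cantor-fibers}, namely that $\Lambda_x$ splits into a Cantor set $F_x$ where $d = 2$ and a complementary part where $d = 1$, the latter of positive measure for the measure of maximal entropy. This forces the effective branching ratio in $\sum_{y \in f^{-n}(z)\cap\Lambda} |Df_s^n(y)|$ to be strictly less than $2$, which combined with a small enough $\vartheta(\alpha)$ produces $P_f^-(\Phi^s) < 0$ and hence $t_s^- < 1$.

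For part (b), I would construct an invariant measure whose stable pointwise dimension is at least $t_\eta$ and then transfer the bound to every point. Let $\mu_\eta$ be the equilibrium state on $\Lambda$ for the H\"older potential $\varphi := t_\eta \Phi^s - \log \eta$; this state exists, is unique, and is Gibbs because $(\Lambda, f)$ is a transitive hyperbolic basic set by Proposition \ref{hyperbolic}. Since $P(\varphi) = 0$, the variational principle gives
\[
h_{\mu_\eta}(f) + t_\eta \int \Phi^s \, d\mu_\eta - \int \log \eta \, d\mu_\eta = 0.
\]
Applying the Ledrappier--Young-type identity for non-invertible hyperbolic maps from \cite{MU}, the pointwise dimension of $\mu_\eta$ along the local stable manifold $W^s_r(z)$ equals $(h_{\mu_\eta}(f) - F_{\mu_\eta}(f)) / (-\int \Phi^s d\mu_\eta)$ for $\mu_\eta$-a.e.\ $z$, where $F_{\mu_\eta}(f)$ is the folding entropy. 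By the Rokhlin-type inequality $F_{\mu_\eta}(f) \le \int \log d \, d\mu_\eta$, and by $d \le \eta$ this is $\le \int \log \eta \, d\mu_\eta$. Substituting into the variational equation yields pointwise dimension $\ge t_\eta$, hence $HD(W^s_r(z) \cap \Lambda) \ge t_\eta$ for $\mu_\eta$-a.e.\ $z$. Since $\mu_\eta$ is Gibbs on a transitive basic set it has full support, and the local product structure of the hyperbolic set together with absolute continuity of stable holonomies upgrades the almost-everywhere bound to $\delta^s(z) \ge t_\eta$ for every $z \in \Lambda$.

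The step I expect to be the main obstacle is the strict inequality $t_s^- < 1$ in part (a). The underlying difficulty is that $d(\cdot)$ is only upper semi-continuous on $\Lambda$, so $\log d$ is not admissible in the classical thermodynamic formalism and the Bowen-type equation $P(t\Phi^s - \log d) = 0$ cannot be used directly; this is exactly what forces the detour through $P_f^-$. The inverse pressure is in turn coarse, and a multiplicative bound using only $d \le 2$ and $|Df_s| \approx \tfrac 12$ gives $P_f^- \le 0$ with no slack. Converting the qualitative existence of the non-overlap region in $\Lambda_x$ into a quantitative, uniform-in-$n$ deficit for the preimage sums is the delicate point; part (b) is conceptually cleaner once the Ledrappier--Young identity of \cite{MU} is in hand, though one must check carefully that the Rokhlin-type bound $F_\mu \le \int \log d \, d\mu$ indeed holds in the overlap setting where $\Lambda$ is only forward-invariant rather than totally invariant.
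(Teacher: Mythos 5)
For part a) your citation of \cite{M-Cam} for the inequality $\delta^s(z)\le t_s^-$ is the right move, but the way you try to obtain the strict inequality $t_s^-<1$ is not what is needed and, as you yourself flag, does not close. The paper does not extract strict negativity from any ``effective branching ratio'' in preimage sums; it verifies the hypothesis of Theorem 1 of \cite{M-Cam}, namely that $\Lambda$ is \emph{not a local repellor}: by Theorem \ref{Cantor-fibers} the fibers $\Lambda_x$ are Cantor sets containing no intervals, hence no local stable manifold lies inside $\Lambda$, and then the cited theorem itself (for a map conformal on stable fibers, hyperbolic, without critical points in $\Lambda$) yields both $\delta^s\le t_s^-$ and $t_s^-<1$. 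Your substitute argument rests on claims that are nowhere established: that $V_1=\{d=1\}$ has positive measure for the measure of maximal entropy (the paper only later, in Corollary \ref{preim}, shows $V_1$ is open and uncountable, and that is proved \emph{using} Theorem \ref{s-dim}, so building it into the proof here would be circular); that the inverse pressure $P^-$ is controlled by a sup of preimage sums $\sum_{y\in f^{-n}z\cap\Lambda}|Df_s^n(y)|$ in the way you use; and that the $d=1$ region is visited with uniform positive frequency along \emph{all} backward branches so as to give a uniform-in-$n$ exponential deficit beating the $\vartheta(\alpha)$ excess in the contraction rates. None of these is supplied, so the key inequality $t_s^-<1$ is left unproved; the missing idea is precisely the ``fibers contain no intervals $\Rightarrow$ not a repellor'' verification.

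For part b) the statement is exactly the main result of \cite{MU} (continuous upper bounds $\eta\ge d$ for the preimage counting function give $\delta^s(z)\ge t_\eta$ at \emph{every} $z$, for maps conformal on stable manifolds without critical points in $\Lambda$), and the paper simply checks these hypotheses and quotes it. Your alternative route has two genuine gaps. First, the Ledrappier--Young-type identity you attribute to \cite{MU} --- stable pointwise dimension of an equilibrium state equal to $(h_\mu-F_\mu)/(-\int\Phi^s d\mu)$ with $F_\mu$ the folding entropy --- is not in that reference nor in any of the cited literature; it is a substantial unproved input for endomorphisms with overlaps. Second, the upgrade from a $\mu_\eta$-a.e.\ bound to a bound at \emph{every} $z\in\Lambda$ via ``local product structure and absolute continuity of stable holonomies'' is exactly the kind of step that fails in this setting: unstable manifolds depend on prehistories and overlap, there is no unstable foliation, and the paper explicitly warns that $\delta^s(z)$ is not known to be constant on $\Lambda$, so holonomy-invariance of stable slices cannot be invoked. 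The covering-type argument of \cite{MU}, which works pointwise with the uniform bound $d\le\eta$, is what gives the conclusion at every point.
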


\begin{proof}

a) We take a fixed small enough $\alpha>0$; this will imply that $\vartheta(\alpha)$ is also small enough such that Theorem \ref{general} works.

We can apply the estimate for the stable dimension from
\cite{M-Cam}, since from Theorem \ref{Cantor-fibers}, an arbitrary
fiber $\Lambda_x$ does not contain intervals. So no local stable
manifold is contained in $\Lambda$, thus $\Lambda$ is not a local
repellor. We proved in Theorem 1 from \cite{M-Cam} that
$\delta^s(z) \le t_s^-$, where $t^-_s$ is the unique zero of the
inverse pressure function $t \to P^-(t\Phi^s)$. We showed also
that $t_s^- < 1$ (adapted to our case), as $f$ is conformal on
vertical fibers, hyperbolic on $\Lambda$ and does not have
critical points in $\Lambda$.

b) It follows immediately from \cite{MU} since our map $f$ is
conformal on stable manifolds and does not have critical
points in $\Lambda$.

\end{proof}

\textbf{Remarks:} 1) This Theorem shows that, when the fiber
contraction factors $s_1, s_2, s_3, s_4$ are all equal to $\frac
12$, $f|_\Lambda$ is far from being a homeomorphism (from the
point of view of stable dimension), for \textbf{all} choices of parameters
$\psi_j, j = 1, 2, 3$, that satisfy condition (\ref{vart}). Indeed
if $\delta^s(z)$ were the zero $t_s$ of the pressure $t \to
P(t\Phi^s)$, then since $\Phi^s \equiv -\log 2$ on $\Lambda$, we
would obtain $t^s = 1$. But we saw in Theorem \ref{s-dim} that
$\delta^s(z) < 1, z \in \Lambda$. Hence when the contraction on
fibers is identically $\frac 12$ and the other parameters of $f$
belong to some specified open set, we do not have a formula for
the stable dimension like in \cite{F}.

2) Also we notice above that $f|_\Lambda$ is not 2-to-1 either.
Indeed assume that the parameters $s_i, \psi_j, i = 1, \ldots, 4,
j = 1, 2$ satisfy (\ref{vart}) and $\alpha$ is small enough and
let the function $f$ given by these parameters in
(\ref{skew-general}) (we do not record now the dependence of $f$
on $\alpha$, but are keeping it in mind). Then from Theorem
\ref{general}, we know that $f$ is hyperbolic as an endomorphism
on $\Lambda$.
 in that case, from \cite{MU2} it follows that for all $z
\in \Lambda$, $\delta^s(z)$ would be equal to the unique zero
$t^s_2$ of $t \to P(t\Phi^s - \log 2)$. But since
$h_{top}(f|_\Lambda) = \log 2$, it would follow that $t^s_2 = 0$.
On the other hand we know from the proof of Theorem
\ref{Cantor-fibers} that there are points $z \in \Lambda_x$ with
$\delta^s(z) = HD(\Lambda_z) \ge \frac{\log 2}{\log (2 +
\frac{1}{\sqrt{\Delta(\alpha)}})} > 0$. We obtained a
contradiction, hence $f|_\Lambda$ is not 2-to-1, $\hfill \square$

\

In fact if $s_i = \frac 12, i = 1, \ldots, 4$ in the definition
(\ref{skew-general}) and if there were only at most $m$
prehistories in $\hat \Lambda$ for each point of $\Lambda$ then
the stable dimension would still be equal to the zero $t_s$ of the
pressure $t \to P(t\Phi^s)$; this follows from \cite{MU2}.
Therefore there must exist a point $z_m\in \Lambda$ with more than
$m$ prehistories, for any integer $m >1$. But then we obtain a
sequence of points $(z_m)_m$ in $\Lambda$ each one with more than
$m$ prehistories in $\hat \Lambda$. They must accumulate to some
point $z \in \Lambda$ which will have therefore infinitely many
prehistories in $\Lambda$. By employing also Theorem
\ref{unstable}, we have proved thus:

\begin{cor}\label{infinite-dir}
Let a small $\alpha >0$ and a function $f$ as in
(\ref{skew-general}), and assume that $s_i = \frac 12, i = 1,
\ldots, 4$ and that the parameters $\psi_j, j = 1, \ldots, 3$
satisfy (\ref{vart}). Then there exists a point $z \in \Lambda$
having infinitely many different prehistories in $\hat \Lambda$,
 and thus infinitely many different unstable directions $E^u_{\hat z}$.
\end{cor}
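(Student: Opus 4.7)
The plan is to argue by contradiction, combining Theorem \ref{s-dim} with the dimension formula from \cite{MU2} that is recalled in the remarks preceding the corollary. Suppose, toward contradiction, that there is a uniform bound $|\pi^{-1}(z)| \le m$ for all $z \in \Lambda$, where $\pi:\hat\Lambda \to \Lambda$ denotes the canonical projection sending a prehistory to its base point. Under such a bound, the result of \cite{MU2} gives $\delta^s(z) = t_s$ for every $z \in \Lambda$, with $t_s$ the unique zero of $t \mapsto P_{f|_\Lambda}(t\Phi^s)$. Since $s_i = \frac{1}{2}$ for every $i$, we have $|\partial_y h| \equiv \frac{1}{2}$ on $\Lambda$, so $\Phi^s \equiv -\log 2$; and since $g|_{J_*^\alpha}$ is conjugate to the one-sided full $2$-shift and the vertical direction is contracting, $h_{top}(f|_\Lambda) = \log 2$ as noted in the proof of Theorem \ref{u-dim}. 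Hence $P_{f|_\Lambda}(t\Phi^s) = (1-t)\log 2$ and $t_s = 1$. This contradicts Theorem \ref{s-dim}(a), which gives $\delta^s(z) \le t_s^- < 1$ strictly.

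The contradiction shows that for every $m \ge 1$ there exists $z_m \in \Lambda$ with strictly more than $m$ distinct prehistories in $\hat\Lambda$. Passing to a subsequence in the compact set $\Lambda$, I would take $z_m \to z^* \in \Lambda$ and claim that $z^*$ itself carries infinitely many prehistories. The tree of $f$-preimages of any point of $\Lambda$ is a subtree of the binary tree, because $g$ is exactly $2$-to-$1$ on $J_*^\alpha$ and each branch of $h_\alpha$ is a monotone affine homeomorphism of $I$ onto its image; therefore $|\pi^{-1}(z_m)| > m$ forces $N_k(z_m) > m$ for some level $k = k_m$, where $N_k(z) := |f^{-k}(z) \cap \Lambda|$. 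Since $g$ is uniformly expanding on $J_*^\alpha$ and the branch domains $I_1^\alpha, I_2^\alpha$ are separated by a distance of order $\alpha$, two distinct $g$-prehistories first differing at depth $k$ remain separated by at least $\alpha$ at their $k$-th coordinate; this rigidity prevents coalescence of prehistories of $z_m$ whenever the distinguishing depth stays bounded, and, combined with the fact that $\Lambda$ is closed, yields the upper semicontinuity of each $N_k$ on $\Lambda$. A diagonal extraction in the compact space $\hat\Lambda$ of prehistories $\hat z_{m_j}^{(i)} \in \pi^{-1}(z_{m_j})$, with bookkeeping of the distinguishing depths at fixed finite $k$, then produces arbitrarily many pairwise distinct limits projecting to $z^*$; the upper semicontinuity of the $N_k$ transfers the unboundedness across the limit and gives $\sup_k N_k(z^*) = \infty$, which on the finitely branching tree at $z^*$ is equivalent to $|\pi^{-1}(z^*)| = \infty$.

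The conclusion then follows from Theorem \ref{unstable}: any two distinct prehistories $\hat z \ne \hat z' \in \pi^{-1}(z^*)$ satisfy $E^u_{\hat z} \ne E^u_{\hat z'}$, with the quantitative lower bound (\ref{omega-gen}), so the infinite fiber $\pi^{-1}(z^*)$ yields infinitely many pairwise distinct unstable directions at $z^*$. The main obstacle is the accumulation step of the second paragraph: a naive diagonal in $\hat\Lambda$ would a priori permit distinct prehistories of $z_m$ to coalesce in the limit, since two prehistories first differing at depth $k$ lie at $\hat\Lambda$-distance of order $2^{-k}$, and $k_m$ may well grow with $m$. The rigidity furnished by the uniform expansion of $g$ and the $\alpha$-separation of $I_1^\alpha, I_2^\alpha$ is precisely what is needed to pin $N_k$ down at each \emph{fixed} finite depth $k$ and to push unboundedness of $|\pi^{-1}(z_m)|$ across the limit to the accumulation point $z^*$.
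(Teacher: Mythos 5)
Your first paragraph reproduces the paper's argument exactly: a uniform bound $m$ on the number of prehistories would force, via \cite{MU2}, that $\delta^s \equiv t_s$, the zero of $t \mapsto P(t\Phi^s)$; with $s_i = \tfrac12$ one has $\Phi^s \equiv -\log 2$, $h_{\text{top}}(f|_\Lambda)=\log 2$, hence $t_s = 1$, contradicting Theorem~\ref{s-dim}. That part is correct and is what the paper does.

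Where both you and the paper run into trouble is the accumulation step, and your proposed repair does not close the gap you yourself flag. Knowing $|\pi^{-1}(z_{m})| > m$ only gives $N_{k_m}(z_{m}) > m$ for some depth $k_m$, which may (and generically will) grow with $m$. Upper semicontinuity of $N_k$ at a \emph{fixed} finite depth $k$ says $\limsup_j N_k(z_{m_j}) \le N_k(z^*)$; but for fixed $k$ the quantity $N_k(z_{m_j})$ is uniformly bounded by $2^k$ and can perfectly well stay equal to $1$ or $2$ for all $j$ once $k_{m_j} > k$. So the semicontinuity of $N_k$ yields no lower bound on $N_k(z^*)$ beyond the trivial one, and the ``diagonal extraction with bookkeeping at fixed finite $k$'' cannot produce arbitrarily many pairwise distinct limit prehistories over $z^*$: the separation between the prehistories $\hat z_{m_j}^{(i)}$ that one would like to preserve is of order $2^{-k_{m_j}}$, which collapses as $j \to \infty$. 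In short, $D := |\pi^{-1}(\cdot)| = \sup_k N_k$ is a supremum of u.s.c.\ functions and there is no general reason for $D$ to be u.s.c.\ on $\Lambda$, so the inference ``$D(z_{m_j}) \to \infty$ and $z_{m_j} \to z^*$ hence $D(z^*)=\infty$'' is unjustified as written. To be fair, the paper's own proof is equally terse at this point --- it simply asserts that an accumulation point of the $z_m$ has infinitely many prehistories without argument --- so you have faithfully reconstructed the paper's route and correctly identified its weak link; what is missing, in both, is a genuine mechanism (e.g.\ a constructive fixed-point/nested-intersection argument in the fibers, or a uniform lower bound on the depth at which new branchings occur near $z^*$) that would actually force the number of prehistories to blow up at the limit point rather than merely along the approximating sequence.
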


We prove now that our examples are both far from having a
homeomorphism-type behaviour, and also far from the constant-to-1
maps of \cite{MU2}:

\begin{cor}\label{preim}
Let a small $\alpha>0$ and a function $f$ defined as in
(\ref{skew-general}), s. t the parameters $s_i, \psi_j, i = 1, \ldots, 4, j = 1,\ldots, 3$ of $f$ satisfy (\ref{vart}). Write $\Lambda$ as the union $V_1 \cup V_2$, where
$V_1$ is defined as the set of points having only one $f$-preimage inside
$\Lambda$ and $V_2$ is the set of points having exactly two
$f$-preimages in $\Lambda$.

a) \ Then $\delta^s(z) \in (\frac{\log
2}{\log(2+\frac{1}{\Delta(\alpha)})}, 1), z \in \Lambda$. So if
$\alpha$ tends to 0, then the stable dimension at an arbitrary
point of $\Lambda$ may be made as close as we want to 1, but
always strictly smaller than 1.

b) \ $V_1$ is an open uncountable set in $\Lambda$, and $V_2$ is a
closed set in $\Lambda$.

c) \ Assume moreover that in the definition (\ref{skew-general}) of $f$,
the contraction factors $s_i, i = 1, \ldots, 4$ are all equal to
$\frac 12$. Then $V_2$ is uncountable as well.
\end{cor}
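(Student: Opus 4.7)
The plan is to handle the three statements in order, using as key inputs the thickness estimate (\ref{tau}) and the Cantor-set structure $F_x$ from Theorem \ref{Cantor-fibers}, together with the upper bound from Theorem \ref{s-dim}. For part (a), the upper bound $\delta^s(z) < 1$ is exactly Theorem \ref{s-dim}(a). For the lower bound I would invoke the classical thickness-to-dimension estimate of Palis--Takens \cite{PT}: every Cantor set $K$ satisfies $HD(K) \ge \frac{\log 2}{\log(2+1/\tau(K))}$. By Proposition \ref{hyperbolic}, $E^s$ is vertical, so for $z = (x,y)$ the local stable manifold $W^s_r(z)$ lies in $\{x\} \times I$ and $W^s_r(z) \cap \Lambda$ is (identified with) a relatively open subset of the fibre $\Lambda_x$. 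Combining this with $\tau(\Lambda_x) \ge \Delta(\alpha)$ from (\ref{tau}) yields $\delta^s(z) = HD(W^s_r(z) \cap \Lambda) \ge HD(\Lambda_x) \ge \frac{\log 2}{\log(2 + 1/\Delta(\alpha))}$. Since $\Delta(\alpha) = O(1/\alpha) \to \infty$ as $\alpha \searrow 0$, the lower bound tends to $1$.

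For part (b), I first note that because $g|_{J_*}$ has exactly two branches and each branch of $h_\alpha$ is single-valued in $y$, every point of $J_* \times I$ has at most two $f$-preimages in $J_* \times I$; combined with $f(\Lambda) = \Lambda$ this forces $d(z) \in \{1,2\}$ and $\Lambda = V_1 \cup V_2$. To prove $V_2$ is closed, take $z_n \in V_2$ with $z_n \to z$ and distinct preimages $w_n, w_n' \in \Lambda$ of $z_n$; after relabeling the first coordinates of $w_n, w_n'$ lie in the disjoint compact sets $\overline{I_1^\alpha}, \overline{I_2^\alpha}$ (separated by about $\tfrac12 - \alpha$), so subsequential limits $w, w' \in \Lambda$ remain in disjoint fibres, hence $w \ne w'$ and $z \in V_2$. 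Therefore $V_1 = \Lambda \setminus V_2$ is open. For the uncountability of $V_1$ I would use the decomposition $\Lambda_x = S_1(x) \cup S_2(x)$ from Theorem \ref{Cantor-fibers} with $x \in I_1^\alpha \cap J_*$: from the endpoint computations in that proof, $S_2(x) \subset [0, \tfrac12 + \tfrac{3\alpha}{4}]$ while $S_1(x)$ contains points with $y$-coordinate arbitrarily close to the right endpoint $c(x,\infty) \approx 1 > \tfrac12 + \tfrac{3\alpha}{4}$. Then $S_1(x) \cap (\tfrac12 + \tfrac{3\alpha}{4}, 1]$ is a non-empty relatively open subset of the perfect Cantor set $S_1(x)$, hence uncountable, and each of its points lies in $V_1$ (the $S_2$-branch preimage is absent, since the maps $h_{x_{-1,i}}$ are injective in $y$).

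For part (c), when $s_i = \tfrac12$ Theorem \ref{Cantor-fibers} produces, for every $x \in I_1^\alpha \cap J_*$, a Cantor set $F_x \subset \Lambda_x$ whose every point has two distinct $f$-preimages in $\Lambda$; hence $F_x \subset V_2$, and being a Cantor set $F_x$ is uncountable, so $V_2$ is uncountable. The step I expect to be most delicate is the lower bound in (a): one must check that the step-presentation $\mathcal{U}_{step}$ of Theorem \ref{Cantor-fibers} indeed meets the hypotheses of the Palis--Takens thickness-to-dimension bound (i.e., is a genuine presentation of $\Lambda_x$ with $\ell(\mathrm{bridge})/\ell(\mathrm{gap}) \ge \Delta(\alpha)$ uniformly), and to justify that $W^s_r(z) \cap \Lambda$ can be identified with (a piece of) the fibre $\Lambda_x$ via the vertical invariance of $E^s$ from Proposition \ref{hyperbolic}.
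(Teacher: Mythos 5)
Your part a) is essentially the paper's argument (Palis--Takens thickness bound plus $\tau(\Lambda_x)\ge\Delta(\alpha)$ from Theorem \ref{general}, upper bound from Theorem \ref{s-dim}), but for b) and c) you take a genuinely different route. The paper proves uncountability of $V_1$ and $V_2$ by contradiction through the stable-dimension formulas of Theorem 3.1 of \cite{MU2}: if $V_1$ were countable, every point could be approximated by points whose preimages up to order $n$ all have two preimages, forcing $\delta^s(z)=t^s_2=0$, contradicting the positive lower bound of a); if $V_2$ were countable (with $s_i=\frac12$), the same mechanism forces $\delta^s(z)=t^s=1$, contradicting Theorem \ref{s-dim}. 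You instead exhibit the uncountable sets directly: $V_1$ contains $\{x\}\times\bigl(S_1(x)\setminus S_2(x)\bigr)$, using the endpoint estimates ($\sup S_2(x)\approx\frac12+\frac{3\alpha}{4}$ versus $\sup S_1(x)\approx 1$) and the fact that a non-empty relatively open subset of a perfect compact set is uncountable; and $V_2$ contains $\{x\}\times F_x$, which is uncountable since $F_x$ is a Cantor set. This is correct, more elementary, and avoids the pressure machinery entirely; note that for the nonlinear family (\ref{skew-general}) the $F_x$ statement you need is Theorem \ref{general}(a) rather than Theorem \ref{Cantor-fibers} (which treats only the model (\ref{skew})), and that your c) never uses $s_i=\frac12$, so it proves a slightly stronger statement than the paper's; what the paper's route buys in exchange is the dimension-theoretic content emphasized there (countability of $V_1$ or $V_2$ would pin $\delta^s$ at $0$ or $1$), which supports the ``far from a homeomorphism and far from 2-to-1 also in Hausdorff dimension'' interpretation. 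Your direct compactness proof that $V_2$ is closed is also fine (the paper just invokes upper semicontinuity of $d$). Two small cautions: in a), being a relatively open subset of $\Lambda_x$ gives a priori only $HD(W^s_r(z)\cap\Lambda)\le HD(\Lambda_x)$, so for the lower bound you must (as you anticipate) observe that such a neighborhood contains a bridge $J_k(x,n)$ of the step presentation, hence a sub-Cantor set still of thickness $\ge\Delta(\alpha)$ to which the Palis--Takens bound applies -- the paper is equally terse on this point; and your separation of the two branch preimages by the disjointness of $I_1^\alpha$ and $I_2^\alpha$, together with injectivity of each $h_{x_{-1,i}}$ in $y$, is exactly what justifies both $d\le 2$ and the limit argument, so it is worth stating once explicitly.
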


\begin{proof}

a) \ We apply a result of Palis and Takens (\cite{PT}) giving an
estimate for the Hausdorff dimension of  a Cantor set $K$ in the
line, as follows: $$HD(K) \ge \frac{\log
2}{\log(2+\frac{1}{\tau(K)})}$$

We showed in Theorem \ref{general} that we have $\tau(\Lambda_x)
\ge \Delta(\alpha)$; and as in the proof of Theorem
\ref{Cantor-fibers}, $\Delta(\alpha) \mathop{\to}\limits_{\alpha
\to 0} \infty$. Hence combining also with Theorem \ref{s-dim}, we
obtain the estimates.

b) \  We recall that $d(z)$ was defined as the number of
$f$-preimages of $z$ belonging to $\Lambda$.

We can partition now $\Lambda$ into two subsets, $V_1:= \{z \in
\Lambda, d(z) = 1\}$ and $V_2:= \{z \in \Lambda, d(z) = 2\}$. It
can be seen easily that $V_1$ is open and $V_2$ is closed, since
$d(\cdot)$ is upper semi-continuous. We remark that $V_1, V_2$ are
not necessarily $f$-invariant.

If $V_1$ would be countable, then we can approximate any point from $\Lambda$ by points from $V_2$;
given an arbitrary $n>1$, we can even approximate any point $z \in \Lambda$ with points $w$ such that
$w \in V_2$ and all its preimages of order less than $n$ are also in $V_2$; i. e if $w_{-i} \in \Lambda \cap f^{-i}w$,
then $w_{-i} \in V_2, i < n$. This makes the proof of Theorem 3.1 of \cite{MU2} to work (holomorphicity in that Theorem
is not essential); all that is important is that $f$ be conformal on stable manifolds, and this is satisfied in our case
as the stable manifolds are 1-dimensional.

Hence it would follow that $\delta^s(z) = t^s_2$, where $t^s_2$ is the unique zero of the pressure
$t \to P(t\Phi^s - \log 2)$.
Now $h_{top}(f|_\Lambda) = \log 2$, since the expansion takes place mainly in the horizontal direction and
since $g|_{J_*}$ is topologically conjugate to $\sigma_2$ on the one-sided Bernoulli space $\Sigma_2^+$.
Thus it follows that $t^s_2 = 0$.

However we saw in part a) that $HD(\Lambda_x) \ge \frac{\log
2}{\log(2+\frac{1}{\Delta(\alpha)})} > 0$. So we cannot have
$HD(\Lambda_x) = t^s_2$, and we obtain a contradiction. Therefore
$V_1$ is \textbf{uncountable}.

c) \ Now assume that all the contraction factors of $f$ on fibers
are equal to $\frac 12$. Let us suppose also that $V_2$ is
countable. Then as above, for any $n
> 1$ we can approximate any point of $\Lambda$ with points having
only one preimage of order $n$ and the proof of Theorem 3.1 from
\cite{MU2} gives that $\delta^s(z) = t^s, z \in \Lambda$, where
$t^s$ is the unique zero of the pressure function $t \to
P(t\Phi^s)$. But since in our case $\Phi^s \equiv -\log 2$ on
$\Lambda$, we obtain $t^s = 1$. But from Theorem \ref{s-dim}, $t^s
< 1$, thus a contradiction. In conclusion $V_2$ is
\textbf{uncountable} as well.
\end{proof}

\

\textbf{Email:}  Eugen.Mihailescu\@@imar.ro

Institute of Mathematics of the Romanian Academy, P. O. Box 1-764,

RO 014700, Bucharest, Romania.

Webpage: www.imar.ro/\~~mihailes

\end{document}